\newcommand{\hhmu}{h,\hat{\mu}}
\newcommand{\Hcal}{\mathcal{H}}
\newcommand{\uFOM}{u_{\mathrm{FOM}}}
\newcommand{\bu}{\mathbf{b}}
\newcommand{\Th}{\mathcal{T}_{h}}
\newcommand{\aSUPG}{a_{\mathrm{SUPG}}}
\newcommand{\dt}{\triangle t}
\newcommand{\uhr}[1][]{u^{#1}_{h,\hat{\mu}}}
\newcommand{\lrX}[1]{\langle #1 \rangle_{X}}
\newcommand{\trnorm}[1]{{\left\vert\kern-0.25ex\left\vert\kern-0.25ex\left\vert #1 
\right\vert\kern-0.25ex\right\vert\kern-0.25ex\right\vert}}
\newcommand{\Lthm}{L^2_{\hat{\mu}}(\hat{\Omega})}
\newcommand{\hmu}{\hat{\mu}}
\newtheorem{theorem}{Theorem}[section]
\newtheorem{proposition}{Proposition}[section]
\newtheorem{lemma}{Lemma}[section]
\newtheorem{remark}{Remark}
\newtheorem*{tancond}{Skewing conditions}
\newtheorem*{DOeqs}{Dual DO evolution equations}
\newtheorem*{dlrprob}{Dynamical Low Rank (DLR) Problem}
\newtheorem*{fomprob}{Full Order Model (FOM) Problem}
\newtheorem*{adrfomprob}{Advection-diffusion-reaction FOM Problem}
\newtheorem*{sfomprob}{Generalised Petrov-Galerkin FOM (PG-FOM) Problem}
\newtheorem*{sdlrprob}{Generalised Petrov-Galerkin DLR (PG-DLR) Problem}
\newtheorem{alg}{Algorithm}
\newcommand{\modt}[1]{{\color{black} #1}}
\title{Petrov-Galerkin Dynamical Low Rank Approximation: SUPG stabilisation of advection-dominated problems}
\author{Fabio Nobile, Thomas Trigo Trindade}
\begin{document}

\maketitle

\begin{abstract}

We propose a novel framework of generalised Petrov-Galerkin Dynamical Low Rank Approximations (DLR) in the context of random PDEs. 
It builds on the standard Dynamical Low Rank Approximations in their Dynamically Orthogonal formulation. 
It allows to seamlessly build-in many standard and well-studied stabilisation techniques that can be framed as either generalised Galerkin methods, or Petrov-Galerkin methods. 
The framework is subsequently applied to the case of Streamine Upwind/Petrov Galerkin (SUPG) stabilisation of advection-dominated problems with small stochastic perturbations of the transport field.
The norm-stability properties of two time discretisations are analysed. 
Numerical experiments confirm that the stabilising properties of the SUPG method naturally carry over to the DLR framework. 

\end{abstract}

{
\small
\textbf{\textit{Keywords}} Dynamical Low-Rank Approximation, SUPG stabilisation, Finite Element stabilisation, random time-dependent PDEs, advection-dominated problems 
}


\section{Introduction}

The efficient and accurate simulation of random or parametric time-dependent Partial Differential Equations (PDEs) is of high interest in the broad context of Uncertainty Quantification.
Repeated solves of the problem for different values of the (possibly random) parameters are needed in many contexts such as parametric sensitivity, Bayesian inference on parameters, uncertainty quantification of output quantities of interest or parameter optimisation in systems design. 
When the underlying PDE is discretised at high accuracy on a fine grid, the overall cost of these tasks becomes prohibitive and calls for Reduced Order Models to make it tractable. 

Fixed Reduced Basis methods such as the popular Proper Orthogonal Decomposition (POD) and its variations~\cite{audena09, buimuwi03, chadbrso99, voka07, voku01} are not well-suited for time-dependent problems if the global-in-time solutions have slowly decaying Kolomogorov $n$-width.
This occurs even if the solution is well-approximated at each time by a small subspace, but that subspace changes significantly over time. 
A prime example of this is given by time-dependent advection-dominated problems with the translation of coherent features.   
Some work has been put in tackling this issue and often involves the use of non-linear transformation of coordinates. 
One prominent example is given by the shifted POD method~\cite{rescseme18}, alongside other types of transported snapshot methods~\cite{naba19, nobaro23} or the \textit{freezing} method~\cite{ohra13, roke03}, which relies on a suitable Lie group acting on a frozen solution. 
In this paper, we consider an alternative approach, namely the Dynamical Low Rank Approximation method~\cite{kolu07}.
More specifically, we consider the Dynamically Orthogonal (DO) formulation~\cite{sale09}, originally developed in the context of random PDEs, which was subsequently understood to be the application of the DLR method to that setting~\cite{munozh15}. 
It consists in seeking a (truncated) Karhunen-Lo\`eve-like approximation of the true solution 
\begin{equation*}
u_{\mathrm{true}}(t,x,\omega) \approx \sum_{i=1}^R {U_i(t,x) Y_i(t, \omega)} \eqqcolon u_{\mathrm{DLR}}(t,x,\omega), 
\end{equation*}
where $x$ denotes the physical variable, $t$ denotes time and $\omega$ the (possibly random) parameters. 
The defining feature of this approach is that the factors $U_i$ and $Y_i$ of the approximate solution dynamically evolve in time -- in other words, and contrary to fixed-basis methods, the low-rank subspace $\mathrm{span}\{U_1, \ldots, U_R\}$ in which the solution is sought adapts in time to accommodate the onset of new features that may appear over time. 

Another practical issue that arises with advection-domiated problems is that of stabilising numerical solutions.
The choice of discretisation can result in numerical instabilities or artefacts, and for practical purposes, cancelling or alleviating those numerical instabilities is of obvious interest. 
The simulation of advection-dominated fluids (such as the Oseen or Navier-Stokes problems) provide a good example in this context. 
It is well-known that naively applying a standard Finite Element Method may lead the solution to display unphysical oscillations, especially so when the solution involves sharp gradients or boundary layers. This has lead to a rich and increasingly mature theory of stabilisation techniques; for advection-dominated problems, see e.g.~\cite{arn82, bedego19, brhu82, bufeha06,bubaga22} and references therein.

In order to build efficient and stabilised numerical approximations of random time-dependent problems, this paper introduces a new generalised Petrov-Galerkin Dynamical Low Rank (PG-DLR) framework as an extension to standard DLR. 
Stabilisation methods that can be phrased as \textit{generalised} Galerkin methods are, as modifications to the dynamics (``right-hand-side''), in essence always possible. 
Such stabilisations have in fact already been recently used in the DLR literature, see e.g. the discretisation of the pure-advection PDE using discontinuous finite elements with upwinding in~\cite{cosc23}, which can be understood as a stabilisation technique. 
The \textit{Petrov}-Galerkin aspect comes with a few caveats. 
Many non-trivial questions must be addressed, among which: what are suitable testing spaces, different from the trial space, that may be considered, given that the DLR framework is primarily defined by testing against a specific time-evolving test space (the tangent space) ? 
under what conditions can the modes be updated efficiently?
are the stabilisation schemes guaranteed to have the same effect as their full order model counterparts?
is it possible to use DLR time-stepping schemes robust with respect to the smallest singular values to update the system?
are standard error estimates recoverable? etc.

%

\modt{
  In this work, we provide positive answers to all of the questions outlined above save for the last point -- that is the topic of a separate work~\cite{notr24}, in which we perform an error analysis of the Streamline Upwind/Petrov Galerkin (SUPG)-stabilised DLR.
Our framework builds the Petrov-Galerkin stabilisation technique into the DLR method and applies the stabilisation on-the-fly to the dynamically evolving bases. 
The interplay between the stabilisation method and the DLR part is not trivial and a major focus of this work to propose a formulation which allows for an efficient implementation of the stabilised DLR scheme by leveraging the structure of the stabilising operators. 
In particular, stabilisation techniques that modify solely one of the spaces (physical or probabilistic) fall in this category, and, in this respect, various Finite Element Method-based stabilisation methods can be seamlessly integrated into the DLR formulation, making the framework relevant for practical applications. 
This work inscribes itself in a growing body of literature tackling the stabilisation of Reduced Order Models (ROMs), many of which focus on the stabilised Reduced Basis (RB) simulation of fluids~\cite{de12,bagiali20,bagiali23}. 
In particular, for the SUPG-stabilised advection-dominated problems, see~\cite{tobaro18, paro14,giiljowe15}. 
The authors in~\cite{paro14} argue that the correct approach seems to be applying the stabilisation both in the offline and online phase -- a feature that, \textit{mutatis mutandis}, our PG-DLR framework automatically integrates. 
The key point in which our method differs from the usual (stabilised) fixed RB paradigm is that it dynamically stabilises the time-dependent bases, hence eschewing the cost of the offline phase while enjoying the advantges of the stabilisation method. 
Closer to our context, we also mention the stabilisation of DO approximations in~\cite{fele18} by Shapiro filters to smooth out the arising oscillations in a post-processing fashion after each time step.}

Upon establishing our framework for PG-DLR, we extend to our context the time-integration schemes proposed in~\cite{kavino21} and study their stability properties.
These schemes have the major advantage of making it possible to recover a dicrete variational formulation, a useful tool when performing the norm-stability and error analyses of the time-integrator. 
We apply the framework to the case of advection-dominated problems using the SUPG method, yielding an SUPG-stabilised Dynamical Low Rank Approximation. 
The norm-stability of two schemes (an implicit Euler scheme and a semi-implicit one) are established in this work; \modt{the error analysis is performed in~\cite{notr24}}.

The paper is structured as follows: in Section~\ref{sec:not}, we formalise the problem and review the concepts of generalised Petrov-Galerkin problems, SUPG stabilisation and the Dynamical Low Rank/Dynamically Orthogonal framework. 
Section~\ref{sec:pg-dlr} introduces the generalised Petrov-Galerkin Dynamical Low Rank (PG-DLR) framework in a generic setting, and discusses the scope and limitations of the method. 
In Section~\ref{sec:supg-dlr}, we particularise the framework to the case of SUPG-stabilisation for advection-dominated problems with deterministic advection or mean-dominated advection with small stochastic fluctuations, and discuss two possible time-discretisations (implicit Euler and semi-implicit) and detail their norm-stability properties. The main conclusion is that, as in~\cite{kavino21}, PG-DLR solutions are expected to display in essence the same properties as their full order model counterparts. 
Finally, Section~\ref{sec:num-exp} illustrates the stabilising effect of SUPG-DLR through numerical examples.


\section{Problem statement} \label{sec:not}

Let $(\Omega, \mathcal{S}, \mu)$ be a complete probability space and let $L^2_{\mu}(\Omega) = \{ f : \Omega \rightarrow  \mathbb{R}, \displaystyle\int_{\Omega} |f|^2 \mathrm{d}\mu < \infty \}$ be the space of square-integrable random variables. Let $D \subset \mathbb{R}^d$ be an open connected domain (for some $d \geq 1$) and let 
$X$ and $V$ be separable Hilbert spaces on $D$ with corresponding scalar product $\langle\cdot, \cdot \rangle_{\star}$ and induced norm $\|u\|^2_{\star} = \langle u, u \rangle_{\star}$ (with $\star = X,V$, resp.) and such that $(V,X,V^{\prime})$ form a Gelfand triple. 
We denote by $L^2_{\mu}(\Omega, \star)$ the Bochner spaces defined as
\[
	L^2_{\mu}(\Omega, \star) = \{ v : \Omega \rightarrow \star, v \; \text{strongly measurable}, \int_{\Omega} \|v\|^2_{\star} \; \mathrm{d}\mu < \infty \},
\]
with scalar product given by 
\[
	(u,v)_{\star,L^2_{\mu}(\Omega)} = \int_{\Omega} \langle u(\omega), v(\omega) \rangle_{\star} \mathrm{d}\mu(\omega).
\]
We consider the following abstract random time-dependent problem
\begin{align} 
	\dot{u}(t, x, \omega) + Lu(t,x,\omega) &= f(t,x,\omega) & \text{a.e. } t \in (0,T], x \in D, \mu\text{-a.e. } \omega \in \Omega \label{eqn:ab-time-dep} \\
	u(0, x, \omega) &= u_0(x, \omega) &  x \in D, \mu\text{-a.e. }\omega \in \Omega \\
	\mathcal{B}(u)(t, \sigma, \omega) &= 0 & \text{a.e. }t \in (0, T],  \sigma \in \partial D, \mu\text{-a.e. }\omega \in \Omega
\end{align}
where the operator $L$, the initial condition $u_0$, the source term $f$, and the boundary conditions $\mathcal{B}(u)$ may be stochastic. 
In what follows, it is assumed that $L : L^2_{\mu}(\Omega, V) \rightarrow L^2_{\mu}(\Omega, V^{\prime})$ is time-independent and linear \modt{for ease of presentation. The PG-DLR framework also accomodates more general, non-linear operators such as Navier-Stokes.}
The operator $L$ induces a bilinear form $\mathcal{A}(u,v) = (Lu, v)_{V^{\prime}V, L^2_{\mu}(\Omega)}$ on $L^2_{\mu}(\Omega, V) \times L^2_{\mu}(\Omega, V)$, where $(\cdot, \cdot)_{V^{\prime}V,L^2_{\mu}(\Omega)}$ denotes the duality pairing between $L^2_{\mu}(\Omega, V^{\prime})$ and $L^2_{\mu}(\Omega, V)$. 
In this work, we assume $f \in L^2_{\mu}(\Omega, X)$; it induces a linear functional $(f,v)_{X,L^2_{\mu}(\Omega)} = \mathcal{F}(v)$ on $L^2_{\mu}(\Omega, X)$. 
The arguments that follow can however be extended to more general forcing terms $f \in L^2_{\mu}(\Omega, V')$. 
For simplicity of exposition, we restrict ourselves to the case of (deterministic) homogeneous Dirichlet boundary conditions, and assume that the functions in $V$ satisfy those boundary conditions. 
The problem in weak form reads

\noindent \textit{Find} $u \in L^2((0,T), L^2_{\mu}(\Omega, V))$ \textit{with} $\dot{u} \in L^2((0,T), L^2_{\mu}(\Omega, V^{\prime}))$ \textit{ s.t. }
\begin{equation} \label{eqn:weak-form-gen}
	\left\{
		\begin{aligned}
			& (\dot{u}, v)_{V'V,L^2_{\mu}(\Omega)} + \mathcal{A}(u, v) = \mathcal{F}(v), \quad \mathrm{a.e.}\;t \in (0,T], \forall v \in L^2_{\mu}(\Omega, V) 
 \\
			& u_{|t=0} = u_0 \in L_{\mu}^2(\Omega, X),
		\end{aligned}
	\right.
	\end{equation}
It is assumed that sufficient conditions are met to ensure the problem is well-posed, e.g. coercivity and continuity of $\mathcal{A}$ with respect to the $L^2_{\mu}(\Omega, V)$-norm, and continuity of $\mathcal{F}(v)$ w.r.t. to the $L^ 2_{\mu}(\Omega, X)$-norm.

\subsection{Full Order Model (FOM) discretisation}

In this work, we discretise the physical space by the Finite Element Method, using a quasi-uniform mesh $\Th \subset D$ where $h$ denotes the mesh size.
The Finite Element space is denoted $V_h \subset V$ with $\mathrm{dim}(V_h) \eqqcolon N_h$.  
The probability space $(\Omega, \mathcal{S}, \mu)$ is discretised using a generic collocation method (e.g., Monte Carlo, Quasi Monte Carlo, Stochastic Collocation or suitable quadrature nodes, etc) to sample a set of $N_C$ points $\hat{\Omega} \coloneqq \{\omega_i\}_{i=1}^{N_C} \subset \Omega$, and replacing $\mu$ with an empirical measure $\hmu \approx \mu$ such that $\hat{\mu} \coloneqq \sum_{i=1}^{N_C} m_i \delta_{\omega_i}$ where $\{m_i\}_{i=1}^{N_C}$  are \emph{positive} weights which sum up to $1$. 
The new scalar product on $L^2_{\hat{\mu}}(\hat{\Omega})$ is given by the empirical mean, 
\[
	\langle f, g \rangle_{\hat{\mu}} = \mathbb{E}_{\hat{\mu}}[f g] = \sum_{i=1}^{N_C} m_i f(\omega_i) g(\omega_i). 
\]
We define $\mathbbm{1} \in L^2_{\hat{\mu}}(\hat{\Omega})$ the constant random variable equal to $1$.
The \textit{(empirical) mean} of $u \in V_h \otimes \Lthm$ is denoted as
\[
	\bar{u} \coloneqq \langle u, \mathbbm{1} \rangle_{\hat{\mu}} = \mathbb{E}_{\hat{\mu}}[u],
\]
and the \textit{zero-mean part} of $u \in V_h \otimes \Lthm$ as 
\[
	u^{\star} \coloneqq u - \bar{u}.
\]
With suitable modifications, the framework described below can be extended to Stochastic Galerkin methods~\cite{ghsp91} (e.g., combined with generalised Polynomial Chaos expansions~\cite{doka02}).

It is also assumed that an inverse inequality holds in $V_h$ between the norms $\| \cdot \|_V$ and $\| \cdot \|_X$ namely for some $k \geq 1$ and $C_I >0$, $\|u_h\|_{V} \leq C_I h^{-k} \|u_h\|_X$  for any $u_h \in V_h$. This inequality naturally extends to elements in $V_h \otimes L^2_{\hat{\mu}}$ with the same $C_I$ and $k$:
\begin{equation} \label{eqn:inv-equal}
	\|u_{\hhmu}\|_{V,\Lthm} \leq \frac{C_I}{h^k} \|u_{\hhmu}\|_{X,\Lthm}, \quad \forall u_{\hhmu} \in V_h \otimes \Lthm,
\end{equation}
as the inequality holds pointwise in $\omega$ and the weights are positive.  
Hereafter, we will use the shorthand notation $\| \cdot \|^2 = (\cdot, \cdot) \equiv (\cdot, \cdot)_{X,L^2_{\hat{\mu}}(\hat{\Omega})}$ for brevity.

%
The approximation of the (weak) solution to~\eqref{eqn:ab-time-dep} is then given by
\begin{fomprob} \noindent\textit{Find } $u_{\mathrm{FOM}} : (0,T] \rightarrow  V_h \otimes L^2_{\hat{\mu}}(\hat{\Omega})$ \textit{s.t.} 
\begin{equation} \label{eqn:weak-form}
	(\dot{u}_{\mathrm{FOM}}, v_{\hhmu})_{X,\Lthm} + \mathcal{A}(u_{\mathrm{FOM}}, v_{\hhmu}) = \mathcal{F}(v_{\hhmu}), \qquad \forall v_{\hhmu} \in V_h \otimes L^2_{\hat{\mu}}(\hat{\Omega}),\; \mathrm{a.e.}\; t \in (0,T] .
\end{equation}
\end{fomprob}
\noindent The high dimensionality of the system $\mathrm{dim}(V_h \otimes L^2_{\hat{\mu}}(\hat{\Omega})) = N_C N_h$ may become intractable in practice and calls for the use of ROMs. 
Furthermore, numerical artefacts may appear in the numerical solution, e.g. for transport-dominated problems, which need to be controlled for practical purposes. 

\subsection{Generalised Petrov-Galerkin framework}

The \textit{Petrov-Galerkin framework} describes variational problems in which the trial space is different from the test space. 
The choice of the test space and its norm prove to have important consequences on the well-conditioning of the problem, which is then often directly reflected by the quality of the numerical approximation. 
Consequently, much attention has been devoted to the investigation of \textit{optimal} test spaces and \textit{optimal} test functions, particularly for advection-diffusion problems (see e.g.,~\cite{sa05,codawe12,dahusc12, dego11} and references therein).
This analysis can be carried out in the infinite-dimensional context before discretisation.

Conversely, the \textit{generalised Galerkin} framework is applied post-discretisation, and consists in replacing the operators $\mathcal{A}$, $\mathcal{F}$ by discrete approximations $\mathcal{A}_h$, $\mathcal{F}_h$, with the aim of stabilising the problem in some fashion. 
We focus on a framework allowing the combination of both approaches, hereafter labelled \textit{generalised Petrov-Galerkin} framework. 
We consider the case where the modified test space is obtained through a linear operator $\Hcal : V_{h} \otimes \Lthm \rightarrow X_h \otimes \Lthm \subset L^2_{\hat{\mu}}(\hat{\Omega}, X)$ acting on the original test space with values in $X_h$, another finite element space contained in $X$ but not necessarily in $V$. 
Furthermore, in this work we consider the case $\mathcal{F}_h(v_{\hhmu}) = \mathcal{F}(\Hcal v_{\hhmu})$, which is valid since $f \in L^2_{\hat{\mu}}(\hat{\Omega}, X)$.
The numerical approximation of~\eqref{eqn:ab-time-dep} is thus obtained by solving the (formal) problem:

\begin{sfomprob}
\noindent \textit{Find } $u_{\mathrm{PGFOM}} : (0,T] \rightarrow V_h \otimes \Lthm$  \textit{ s.t.}
\begin{multline} \label{eqn:weak-form-pg}
	(\dot{u}_{\mathrm{PGFOM}}, \Hcal v_{\hhmu})_{X,\Lthm} + \mathcal{A}_h( u_{\mathrm{PGFOM}}, v_{\hhmu}) = \mathcal{F}(\Hcal v_{\hhmu}), 
	\\ 
	\forall v_{\hhmu} \in V_h \otimes L^2_{\hat{\mu}}(\hat{\Omega}), \; \mathrm{a.e.}\; t \in (0,T].
\end{multline}
\end{sfomprob}
\noindent This problem need not necessarily be strongly consistent with respect to~\eqref{eqn:ab-time-dep}. 
However, if the starting point is a skewing of the test space by~$\Hcal$, strong consistency may be recovered by ensuring that $\mathcal{A}_h(u, v_{\hhmu}) = \mathcal{A}(u, \Hcal v_{\hhmu})$ for $u$ the true solution.
An obvious condition $\Hcal$ must verify for Problem~\eqref{eqn:weak-form-pg} to be well-posed is $\mathrm{Im}(\Hcal) \simeq V_h \otimes \Lthm$. 
The formulation~\eqref{eqn:weak-form-pg} can also accommodate the case $\Hcal = \mathrm{Id}$ and modified $\mathcal{A}$ and $\mathcal{F}$ to account e.g. for quadrature formulas or other approximations, leading straightforwardly to a generalised Galerkin problem.

%

The bilinear form $\mathcal{A}_h$ is defined on $L^2_{\hat{\mu}}(\hat{\Omega}, V_h) \times L^2_{\hat{\mu}}(\hat{\Omega}, V_h)$, and therefore by the Riesz representation theorem the functional $\mathcal{A}_h(u, \cdot) : L^2_{\hat{\mu}}(\hat{\Omega}, V_h) \rightarrow \mathbb{R}$ can be recast as $\mathcal{A}_h(u_{\hhmu}, \cdot) = (L_h u_{\hhmu}, \cdot)_{V_h^{\prime}V_h,\Lthm}$ in $L^2_{\hat{\mu}}(\hat{\Omega}, V_h')$ with $L_h : L^2_{\hat{\mu}}(\hat{\Omega}, V_h) \rightarrow L^2_{\hat{\mu}}(\hat{\Omega}, V_h^{\prime})$. 
The problem may be rewritten as
\begin{multline} \label{eqn:weak-form-pg-2}
	(\dot{u}_{\mathrm{PGFOM}}, \Hcal v_{\hhmu})_{X,\Lthm} + (L_h u_{\mathrm{PGFOM}}, v_{\hhmu})_{V^{\prime}_h V_h,\Lthm} = (f, \Hcal v_{\hhmu})_{X,\Lthm}, 
	\\
	\forall v_{\hhmu} \in V_h \otimes L^2_{\hat{\mu}}(\hat{\Omega}), \; \mathrm{a.e.}\; t \in (0,T]
\end{multline}
Numerous finite element stabilisation frameworks fall within this formulation, such as the interior penalty method~\cite{arn82}, or, in the case of advection-dominated problems, residual-based methods such as Galerkin Least-Squares~\cite{bedego19}, the Douglas-Wang method or the SUPG method~\cite{brhu82}, which will be described in more detail in the next section.

\subsubsection{Streamline Upwind Petrov Galerkin (SUPG) stabilisation}

Many physical processes are governed by the transient advection-diffusion-reaction equation,  
\begin{equation} \label{eqn:adv-diff-reac}
	\begin{aligned} 	\partial_t u  - \varepsilon \Delta u + \mathbf{b} \cdot \nabla u + c u &= f, && \text{a.e. } t \in (0,T], x \in D,\mu\text{-a.e. } \omega \in \Omega \\
	u(0, x, \omega) &= u_0(x, \omega), && x \in D, \omega \in \Omega, \\
	u(t, \sigma, \omega) &= 0, && t \in (0,T], \sigma \in \partial D, \omega \in \Omega.
\end{aligned}
\end{equation}

Such equations are also a prototype for more complex models such as the incompressible or compressible Navier-Stokes equations.
Concerning the coefficients in~\eqref{eqn:adv-diff-reac}, we assume that
\begin{equation} \label{eqn:varepscoer}
C_E \hat{\varepsilon} \geq \varepsilon(\omega) \geq \hat{\varepsilon} > 0 \quad \forall \omega \in \Omega.
\end{equation}
The diffusion coefficient $\varepsilon$ could also depend on $x$ as long as it verifies the same uniform upper and lower bound. 
In this case, it would be more natural to consider as diffusion term $- \mathrm{div}(\varepsilon \nabla u )$ rather than $- \varepsilon \Delta u$ in~\eqref{eqn:adv-diff-reac}.

We denote the quantity
\begin{equation} 
	\tilde{\mu}(x, \omega) \coloneq c(x, \omega) - \frac{|c(x, \omega)|}{2} - \frac{1}{2}\mathrm{div}\, \bu(x, \omega), \; \mathrm{for}\; x\in D, \omega \in \Omega,
\end{equation}
and define $\displaystyle \tilde{\mu}_0 = \inf_{x \in D, \omega \in \Omega} \tilde{\mu}(x,\omega)$ and $\nu = - \min \left\{ \tilde{\mu}_0, 0\right\}$, whence it follows
\begin{equation}
	\mu(x,\omega) \coloneqq \tilde{\mu}(x, \omega) + \nu \geq 0, \; x \in D, \omega \in \Omega.
\end{equation}
Finally, we denote $\displaystyle \mu_0 \coloneqq \inf_{x \in D, \omega \in \Omega} \mu(x,\omega)$. The condition~$\displaystyle \inf_{x \in D, \omega \in \Omega}(c - \nicefrac{1}{2} \mathrm{div}\, \bu) > 0$ is often assumed in the SUPG literature~\cite{jovo11, rohast08}. 
\modt{This condition does not allow to directly treat certain cases of interest, such as zero-divergence advection field and $c = 0$, and requires to perform some suitable change of variables first}. Therefore we avoid using it in the analysis that follows and only require that $\|c\|_{L^{\infty}(\hat{\Omega} \times D)}$, $\|\bu\|_{L^{\infty}(\hat{\Omega} \times D)}$, $\|\mathrm{div}\, \bu \|_{L^{\infty}(\hat{\Omega} \times D)} < \infty$.

Taking the weak form and discretising by the Galerkin-collocation method described above, the standard full order model problem~\eqref{eqn:adv-diff-reac} reads

\noindent\textit{Find } $u_{\mathrm{FOM}} : (0,T] \rightarrow  V_h \otimes L^2_{\hat{\mu}}(\hat{\Omega})$ \textit{s.t.} 
\begin{multline} \label{eqn:fom-varf}
	(\dot{u}_{\mathrm{FOM}}, v_{\hhmu}) + (\varepsilon \nabla u_{\mathrm{FOM}}, \nabla v_{\hhmu})
	+ (\bu \cdot \nabla \uFOM, v_{\hhmu}) + (c\uFOM, v_{\hhmu})
	= (f, v_{\hhmu}), \\ \qquad \forall v_{\hhmu} \in V_h \otimes L^2_{\hat{\mu}}(\hat{\Omega}), \mathrm{a.e.}\; t \in (0,T].
\end{multline}

The numerical simulation of these processes using the finite elements method on some triangulation $\Th \subset D$ turns out to be challenging, as standard FEM solutions are known to develop spurious oscillations when the dynamics is \textit{advection-dominated}, which happens when the \textit{local Péclet number} verifies
\begin{equation} \label{eqn:local-peclet}
	\mathbb{P}\mathrm{e}_K(x,\omega) = \frac{| \bu(x, \omega) | h_K}{2 \varepsilon(\omega)} > 1,  \quad \text{for some } K \in \Th, \forall x \in K, \forall \omega \in \hat{\Omega},
\end{equation}
$h_K$ denoting the diameter of the element $K \in \Th$. 

We particularise the general Petrov-Galerkin framework~\eqref{eqn:weak-form-pg} to the SUPG-stabilised finite element approximation applied to~\eqref{eqn:adv-diff-reac}.
In this case, $V = H^1_0(D)$, $X = L^2(D)$. Assuming the dependence of $\bu$ on $x$ is polynomial, we set $V_h = \mathbb{P}^C_k(\Th)$ (Continuous Galerkin finite elements of degree $k$) and $X_h = \mathbb{P}^{DG}_{k'}(\Th)$ (Discontinuous Galerkin finite elements of degree $k'$, to take into account the additional polynomial terms stemming from $\bu \cdot \nabla V_h$).
In this Finite Element setting, the inverse inequality~\eqref{eqn:inv-equal} holds with $k=1$, i.e., $\|u_{\hhmu}\|_{V,\Lthm}~\leq~C_I h^{-1}\|u_{\hhmu}\|_{X,\Lthm} $.

The SUPG method is a popular method with an extensive literature~\cite{brhu82, kn08, bu10} dedicated to it that tackles this problem. The SUPG bilinear form is defined as~\begin{align} \label{eqn:asupgref}
	\aSUPG(u, v) = (\varepsilon \nabla u, \nabla v) &+ (\bu \cdot \nabla u, v) + (cu, v) \nonumber
	\\ 
																									&+ \sum_{K \in \Th} \delta_K (- \varepsilon \Delta u + \bu \cdot \nabla u + cu, \bu \cdot \nabla v)_{K,\Lthm}, 
\end{align}
where $(u,v)_{K,\Lthm} = \sum_{i=1}^{N_C} m_i (u(\cdot, \omega_i),v(\omega_i))_K$.
The SUPG method consists in seeking an approximation to the original problem by solving
\begin{adrfomprob}
\noindent\textit{Find } $u_{\mathrm{PGFOM}} : (0,T] \rightarrow  V_h \otimes L^2_{\hat{\mu}}(\hat{\Omega})$ \textit{s.t.} 
\begin{multline} \label{eqn:supgsfom}
	(\dot{u}_{\mathrm{PGFOM}}, v_{\hhmu}) + \sum_{K \in \Th} \delta_K ( \dot{u}_{\mathrm{PGFOM}}  ,  \bu \cdot \nabla v_{\hhmu})_{K,\Lthm} + \aSUPG(\uFOM, v_{\hhmu}) \\ 
	= (f, v_{\hhmu})  + \sum_{K \in \Th} \delta_K (f, \bu \cdot \nabla v_{\hhmu})_{K,\Lthm} \qquad \forall v_{\hhmu} \in V_h \otimes L^2_{\hat{\mu}}(\hat{\Omega}), \; \mathrm{a.e.}\; t \in (0,T].
\end{multline}
\end{adrfomprob}
In the case where $\mathrm{div} \, \bu = 0$, this can be understood as adding viscosity in the direction of the transport. We refer to~\cite{quva08} for a more in-depth discussion of this topic. 

The SUPG method straightforwardly falls into the class of problems of the form~\eqref{eqn:weak-form-pg}, where $\mathcal{A}_h(u,v) = \aSUPG(u,v)$, $\mathcal{F}_h(v) = (f, v)+  \sum_{K \in \Th} \delta_K (f, \bu \cdot \nabla v)= (f, \Hcal v)_{K,\Lthm} $, $\Hcal$ being given by   
\begin{equation} \label{eqn:supgHcal}
	\Hcal v = v + \sum_{K \in \Th} \delta_K \chi_{K} \bu \cdot \nabla v 
\end{equation}
where $\chi_K$ is the indicator function on $K \in \Th$. 

Furthermore, the following norm (defined on each cell $K$ of the mesh) will prove to be useful: 
\begin{equation}
\trnorm{v}_K = \sup_{(x, \omega) \in K \times \hat{\Omega}} |v(x,\omega)|
\end{equation}

Finally, we establish the (weak) coercivity of the SUPG operator. 
The scope and statement of the lemma is slightly different from usual SUPG assumptions, but the proof (given in the appendix for completeness) follows very similar steps to~\cite[Lemma 3.25]{rohast08}.
\begin{lemma} \label{lem:coerasupg} Assuming~\eqref{eqn:varepscoer} and 
	\begin{equation} \label{eqn:deltaK-coerc}
		\delta_K \leq \left\{ \frac{1}{2 \trnorm{c}_K}, \frac{h_K^2}{2 d C_I^2 C^2_E \hat{\varepsilon} } \right\},
	\end{equation}
	it holds
	\begin{equation} \label{eqn:asupg-weak-coer}
		a_{\mathrm{SUPG}}(u_{\hhmu}, u_{\hhmu}) \geq \frac{1}{2} \|u_{\hhmu}\|^2_{\mathrm{SUPG}} - \nu \|u_{\hhmu}\|^2,
	\end{equation}
	where 
	\begin{equation}
		\|u_{\hhmu}\|_{\mathrm{SUPG}}^2 =  \hat{\varepsilon} \|\nabla u_{\hhmu}\|^2_{X,\Lthm} + \sum_{K \in \mathcal{T}_{\!h}} \delta_K \|\bu \cdot \nabla u_{\hhmu}\|^2_{K,\Lthm} + \| \mu^{\nicefrac{1}{2}} u_{\hhmu}\|^2_{X,\Lthm}
	\end{equation}
\end{lemma}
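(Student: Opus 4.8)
The plan is to test $\aSUPG$ against $u_{\hhmu}$ itself and bound each of the six resulting contributions either from below by a piece of $\|u_{\hhmu}\|_{\mathrm{SUPG}}^2$, or from above by a controllable fraction of those pieces. This follows the strategy of~\cite[Lemma 3.25]{rohast08}, but adapted to the collocation-weighted inner products $(\cdot,\cdot)_{X,\Lthm}$ and to the relaxed sign assumption on $c - \nicefrac{1}{2}\,\mathrm{div}\,\bu$, which forces us to carry the Gårding-type defect $-\nu\|u_{\hhmu}\|^2$.

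First I would treat the Galerkin part. The diffusion term satisfies $(\varepsilon \nabla u_{\hhmu}, \nabla u_{\hhmu}) \geq \hat{\varepsilon}\|\nabla u_{\hhmu}\|^2_{X,\Lthm}$ by the lower bound in~\eqref{eqn:varepscoer}. Integrating the advective term by parts (the boundary contribution vanishes cellwise since $V_h \subset H^1_0(D)$ and the identity holds pointwise in $\omega$) turns $(\bu\cdot\nabla u_{\hhmu}, u_{\hhmu})$ into $-\nicefrac{1}{2}(\mathrm{div}\,\bu\, u_{\hhmu}, u_{\hhmu})$, so that, together with the reaction term, the zeroth-order Galerkin contribution becomes $((c - \nicefrac{1}{2}\,\mathrm{div}\,\bu)\, u_{\hhmu}, u_{\hhmu})$. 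Using the defining identity $c - \nicefrac{1}{2}\,\mathrm{div}\,\bu = \tilde{\mu} + \nicefrac{|c|}{2} = \mu - \nu + \nicefrac{|c|}{2}$, this equals $\|\mu^{\nicefrac{1}{2}} u_{\hhmu}\|^2 - \nu\|u_{\hhmu}\|^2 + \nicefrac{1}{2}(|c| u_{\hhmu}, u_{\hhmu})$. The nonnegative surplus $\nicefrac{1}{2}(|c| u_{\hhmu}, u_{\hhmu})$ is exactly what will later offset the reactive stabilisation cross-term; this is the structural purpose of the $\nicefrac{|c|}{2}$ appearing in $\tilde{\mu}$.

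Next I would keep the convective stabilisation term $\sum_{K}\delta_K\|\bu\cdot\nabla u_{\hhmu}\|_{K,\Lthm}^2$ intact and bound the two indefinite stabilisation cross-terms by Cauchy--Schwarz followed by Young's inequality, each time spending one quarter of the convective term. For $\sum_K \delta_K(-\varepsilon\Delta u_{\hhmu}, \bu\cdot\nabla u_{\hhmu})_{K,\Lthm}$, the leftover factor $\sum_K\delta_K\|\varepsilon\Delta u_{\hhmu}\|_{K,\Lthm}^2$ is controlled by the upper bound $\varepsilon\leq C_E\hat{\varepsilon}$ from~\eqref{eqn:varepscoer} together with the cellwise inverse inequality applied to the polynomial $\nabla u_{\hhmu}$, which gives $\|\Delta u_{\hhmu}\|_{K,\Lthm}^2 \leq d\,C_I^2 h_K^{-2}\|\nabla u_{\hhmu}\|_{K,\Lthm}^2$; the choice $\delta_K\leq h_K^2/(2 d C_I^2 C_E^2 \hat{\varepsilon})$ then makes the whole sum at most $\nicefrac{1}{2}\,\hat{\varepsilon}\|\nabla u_{\hhmu}\|^2_{X,\Lthm}$. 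For $\sum_K\delta_K(c u_{\hhmu}, \bu\cdot\nabla u_{\hhmu})_{K,\Lthm}$, the crucial step is the pointwise factorisation $c^2 \leq |c|\,\trnorm{c}_K$, which yields $\delta_K\|c u_{\hhmu}\|_{K,\Lthm}^2 \leq \delta_K\trnorm{c}_K\,(|c| u_{\hhmu}, u_{\hhmu})_{K,\Lthm}$, so that the condition $\delta_K\trnorm{c}_K\leq\nicefrac{1}{2}$ bounds the sum by $\nicefrac{1}{2}(|c| u_{\hhmu}, u_{\hhmu})$.

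Finally I would collect terms: the two diffusion pieces leave $\nicefrac{1}{2}\,\hat{\varepsilon}\|\nabla u_{\hhmu}\|^2$, the three convective pieces leave $\nicefrac{1}{2}\sum_K\delta_K\|\bu\cdot\nabla u_{\hhmu}\|_{K,\Lthm}^2$, the two $\nicefrac{1}{2}(|c| u_{\hhmu}, u_{\hhmu})$ contributions cancel exactly, and retaining $\|\mu^{\nicefrac{1}{2}} u_{\hhmu}\|^2 \geq \nicefrac{1}{2}\|\mu^{\nicefrac{1}{2}} u_{\hhmu}\|^2$ produces precisely $\nicefrac{1}{2}\|u_{\hhmu}\|_{\mathrm{SUPG}}^2 - \nu\|u_{\hhmu}\|^2$. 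I expect the main obstacle to be the careful handling of the two cross-terms: deriving the inverse estimate for $\Delta u_{\hhmu}$ with the exact constant $d\,C_I^2$ (which requires a cellwise inverse inequality and a Cauchy--Schwarz over the $d$ second derivatives), and recognising that the sharp factorisation $c^2\leq|c|\,\trnorm{c}_K$ — rather than the cruder $c^2\leq\trnorm{c}_K^2$ — is what makes the Galerkin surplus $\nicefrac{1}{2}(|c| u_{\hhmu}, u_{\hhmu})$ cancel the reactive cross-term and close the estimate.
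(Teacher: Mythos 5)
Your proposal is correct and follows essentially the same route as the paper's proof: lower-bounding the Galerkin part via integration by parts and the decomposition $c - \nicefrac{1}{2}\,\mathrm{div}\,\bu = \mu - \nu + \nicefrac{|c|}{2}$, keeping the convective stabilisation term intact, and absorbing the two stabilisation cross-terms by Cauchy--Schwarz and Young (each spending a quarter of the convective term), using the cellwise inverse inequality with constant $\sqrt{d}\,C_I/h_K$ for the diffusive one and the factorisation $c^2 \leq |c|\,\trnorm{c}_K$ for the reactive one, so that the surplus $\nicefrac{1}{2}(|c|u_{\hhmu},u_{\hhmu})$ cancels exactly. The constants, the role of each condition in~\eqref{eqn:deltaK-coerc}, and the final collection of terms all match the paper's argument.
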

The second constraint on $\delta_K$ in~\eqref{eqn:deltaK-coerc} can be dropped when using $\mathbb{P}_1^{C}(\Th)$-finite elements. 
Condition~\eqref{eqn:deltaK-coerc} is similar to the one often found in the SUPG literature, which asks that $\displaystyle \delta_K \lesssim \inf_{x \in D, \omega \in \Omega}(c - \nicefrac{1}{2} \mathrm{div} \;\bu
) \|c\|_{L^{\infty}(K)}^{-2}$. Assuming zero-divergence of the advection field, the scalings on $\delta_K$ are of the same order. 


\subsection{Dynamical Low Rank (DLR) Approximations}

The Dynamical Low Rank (DLR) framework has been the subject of increasing attention and study in recent years. 
Originally developed in~\cite{kolu07} for time-dependent matrix equations, it has been independently investigated in the context of random PDEs in~\cite{sale09} under the name of Dynamically Orthogonal (DO) method, which is the formalism adopted in this work. 
Given a random problem $\dot{u} + \mathcal{L}u = f$, the DO/DLR method seeks an approximation of the true random field mimicking a truncated Karhunen-Loève expansion 
\begin{equation} \label{eqn:do-exp}	
	u_{\mathrm{true}}(t,x,\omega) \approx u_{\mathrm{DO}}(t,x,\omega) = \sum_{i=0}^R U_i(t,x) Y_i(t,\omega),
\end{equation}
where the \textit{physical} modes $\{U_i(t,x)\}_{i=0}^R$ and the \textit{stochastic} modes $\{Y_i(t, \omega)\}_{i=0}^R$ evolve in time to approximate the optimal low-rank representation of $u_{\mathrm{true}}(t,x,\omega)$.
The approximation belongs to the differential manifold $\tilde{\mathcal{M}}_R = \{ u = \sum_{i=0}^R U_i Y_i \;|\; U_i \in V_h, Y_i \in \Lthm \} \subset V_h \otimes L^2_{\hat{\mu}}$ at all times.
Following a common practice in the UQ community, we track the mean explicitly. 
To this end, we hereafter let $Y_0 \coloneqq \mathbbm{1}$. 
In order to avoid redundancy between the mean and other stochastic modes, the remaining stochastic modes are imposed to belong to the space of zero-mean stochastic modes, denoted $L^2_{\hat{\mu},0}(\hat{\Omega}) \coloneqq \{ Z  \in L^2_{\hat{\mu}}(\hat{\Omega}), \mathbb{E}_{\hat{\mu}}[Z] = 0 \}$.   
We define the \textit{differential manifold of $R$-rank functions} as 
\begin{multline}
	\mathcal{M}_{R} = \{u \in V_h \otimes L^2_{\hat{\mu}}(\hat{\Omega}) : u = U_0 Y_0 + \sum_{i=1}^R U_i Y_i, \text{ s.t. } \mathbb{E}_{\hat{\mu}}[Y_i Y_j] = \delta_{ij}, \{U_i\}_{i=1}^R \text{ lin. ind. } \\
 \text{and } \{U_i\}_{i=0}^R \in V_h, \{Y_i\}_{i=1}^R \in L^2_{\hat{\mu},0}(\hat{\Omega})\},
\end{multline}
in a slight abuse of nomenclature\footnote{Technically, $\mathcal{M}_R = V_h Y_0 \oplus \mathcal{M}_R^{\star}$, where $\mathcal{M}_R^{\star}$ is the manifold of rank-$R$, zero-mean random fields, see e.g.~\cite{muno18} for a more detailed discussion on the construction.}.
The representation of $R$-rank functions is not unique; conversely, a unique representation of tangent vectors $ \delta u = \delta u_0 Y_0 + \sum_{i=1}^R ( \delta u_i Y_i + U_i \delta y_i ) \in \mathcal{T}_u \mathcal{M}_R$ for $u =  \sum_{i=0}^R U_i Y_i$ is possible. In this work, we use the \textit{Dual Dynamically Orthogonal} (Dual DO) gauge condition~\cite{muno18}, which imposes 
\begin{equation} \label{eqn:dualDOconds}
	\mathbb{E}[Y_i \delta y_j] = 0, \quad \forall i,j = 1, \ldots,R.
\end{equation}
The \textit{tangent space at} $u = \sum_{i=0}^R U_i Y_i \in \mathcal{M}_{R}$ is then characterised as
\begin{multline}
	\mathcal{T}_u \mathcal{M}_{R} = \{\delta u = \delta u_0 Y_0 + \sum_{i=1}^R (\delta u_i Y_i + U_i \delta y_i), \text{ such that } \{\delta u_i\}_{i=0}^R \in V_h, \{\delta y_i\}_{i=1}^R \in L^2_{\hmu,0}(\hat{\Omega}) \\ 
	\mathbb{E}_{\hmu} [\delta y_i Y_j] = 0, \, \forall 1 \leq i,j \leq R
	\}.
\end{multline}
The tangent space is independent of $U_0$. Therefore, with a slight abuse of notation, given $U_0$, $U = (U_1, \ldots, U_R)$ and $Y = (Y_1, \ldots, Y_R)$ s.t. $u = U_0 Y_0 + U Y^{\top}$, we denote the tangent space at $u$ by $\mathcal{T}_{\!\!U Y^{\top}} \mathcal{M}_{R}$. Unless explicitly stated otherwise, $U$ always represents the $R$ deterministic modes $(U_1, \ldots, U_R)$ (without the mean $U_0$) and analogously for $Y = (Y_1, \ldots, Y_R)$. Introducing some further notation, $\mathcal{Y}$ denotes $\mathrm{span}\{Y_i\}_{i=1}^R \subset L^2_{\hat{\mu}, 0}(\hat{\Omega})$, and $\mathcal{P}_{\mathcal{Y}}[\cdot] = \sum_{i=1}^R \mathbb{E}_{\hmu}[\cdot Y_i ] Y_i$ the orthogonal projection on $\mathcal{Y}$. The projection on the orthogonal complement in $L^2_{\hat{\mu},0}$ is given by $\mathcal{P}^{\perp}_{\mathcal{Y}}[v] = v^{\star} - \mathcal{P}_{\mathcal{Y}}[v]$, where we recall that $v^{\star} = v - \mathbb{E}_{\hmu}[v]$.

The DLR Approximation framework consists in solving: 
\begin{dlrprob}
	\textit{Find } $u_{\mathrm{DLR}} : (0,T] \rightarrow \mathcal{M}_{R}$ \textit{s.t.} 
\begin{equation} \label{eqn:dlr-weak-form}
	(\dot{u}_{\mathrm{DLR}}, v_{\hhmu})_{X,\Lthm} + \mathcal{A}(u_{\mathrm{DLR}}, v_{\hhmu}) = \mathcal{F}(v_{\hhmu}) \quad \forall v_{\hhmu} \in \mathcal{T}_{u_{\mathrm{DLR}}} \mathcal{M}_{R}, \; \mathrm{a.e.}\; t \in (0,T]. 
\end{equation}
\end{dlrprob}
At each point in time, the test space depends on the solution itself. The DLR problem has a natural interpretation of projecting the dynamics $- L u_{\mathrm{DLR}} + f$ onto $\mathcal{T}_{u_{\mathrm{DLR}}}\mathcal{M}_R$. 

Finally, following the steps as in e.g.~\cite{sale09}, the solution $u_{\mathrm{DLR}} = U_0 Y_0 + \sum_{i=1}^R U_i Y_i$ of~\eqref{eqn:dlr-weak-form} is equivalently characterised by the solution to the system
\begin{DOeqs}
	\begin{align}	
&		\langle \dot{U}_i, v_h \rangle_X + \left\langle \mathbb{E}_{\hmu}[Lu_{\mathrm{DLR}} Y_i], v_h\right\rangle_{V^{\prime}V} = \langle \mathbb{E}_{\hat{\mu}}[f Y_i], v_h \rangle_{X}, && \forall v_h \in V_h, \forall 0 \leq i \leq R \label{eqn:DO-physical} \\
&		\sum_{i=1}^R\dot{Y}_i M_{ij} = \mathcal{P}^{\perp}_{\mathcal{Y}}[\left\langle f - L u_{\mathrm{DLR}} , U_j \right\rangle_{V^{\prime}V}],  && \text{ in } L^2_{\hmu,0}(\hat{\Omega}), \forall 1 \leq j \leq R \label{eqn:DO-stochastic}
	\end{align}
\end{DOeqs}
\noindent where $M_{ij} = \langle U_i, U_j \rangle_X$ denotes the Gram matrix of the basis $\{U_i\}_{i=1}^R$. 

\begin{remark}
	The low-rank approximation obtained by fixing $Y_0 = \mathbbm{1}$ results in a sub-optimal $R+1$ approximation, but some subspaces have inherent physical meaning and tracking the dynamics on those subspaces explicitly may be of particular interest. 
	For instance, in the context of the Vlasov-Poisson equation, this approach was used to ensure the conservation of mass, momentum and energy~\cite{eijo21}. In~\cite{muno18}, the authors use similar ideas to treat random boundary conditions.
\end{remark}

\section{A Petrov-Galerkin framework for DLR} \label{sec:pg-dlr}

In this section we combine the generalised Petrov-Galerkin formalism with the model order reduction provided by the DLR framework. 
Our approach is to start from the generalised Petrov-Galerkin formulation~\eqref{eqn:weak-form-pg} and restrict the space of test functions to $\mathcal{T}_{u_{\mathrm{DLR}}}\mathcal{M}_{R}$. This leads to the following generalised Petrov-Galerkin problem: 
\begin{sdlrprob} \noindent \textit{Find} $u_{\mathrm{PGDLR}} : (0,T] \rightarrow \mathcal{M}_{R}$ \textit{s.t.} 
\begin{multline} \label{eqn:PG-start}
	(\dot{u}_{\mathrm{PGDLR}}, \Hcal v_{\hhmu})_{X,\Lthm} + \mathcal{A}_h( u_{\mathrm{PGDLR}}, v_{\hhmu}) = \mathcal{F}(\Hcal v_{\hhmu}),
	\\ 
	\forall v_{\hhmu} \in \mathcal{T}_{u_{\mathrm{PGDLR}}}\mathcal{M}_{R}, \mathrm{a.e.}\; t \in (0,T].
\end{multline}
\end{sdlrprob}
It it noteworthy that the framework seeks a DLR approximation of a stabilised problem, rather than seeking to stabilise a DLR approximation. 
This framework has a natural interpretation of \textit{obliquely projecting} the dynamics $- L u_{\mathrm{DLR}} + f$ onto $\mathcal{T}_{u}\mathcal{M}_{R}$, unlike the standard DLR setting where the projections are orthogonal. 
We mention~\cite{dopanafeba23}, which uses a different type of oblique projections that arise from sampling columns and rows through the DEIM method, to efficiently compute the solution to DLR-like systems.   
Henceforth, the subscript ``PGDLR'' will be omitted to lighten the notation.

\subsection{On the skewing operator $\Hcal$} \label{subsubsec:Hcal}

The explicit characterisation of the time-derivatives of the physical and stochastic modes as in~\eqref{eqn:DO-physical}-\eqref{eqn:DO-stochastic} is a desirable feature of the DLR framework for multiple reasons. 
In the standard DO/DLR formulation~\eqref{eqn:DO-physical}-\eqref{eqn:DO-stochastic}, the time-derivatives have dimension $N_h$ for the physical modes (resp. $N_C$ for the stochastic modes), and naturally belong to $V_h$ (resp. $L^2_{\hmu,0}(\hat{\Omega})$); their computation does not require the assembly at each time step of an explicit basis of the tangent space; these parametrisations allow for particularly efficient computions when the operator $L$ and forcing term $f$ have a separable form (in the sense of e.g.~\cite[Definition 3.2]{cosc23}). This section explores when these properties can be recovered in the PG-DLR context.  

In the standard DLR framework, these properties hinge on two features of the tangent space in which the solution's time derivative is sought, namely (a) the tensor-product structure of the scalar product in $L^2_{\hat{\mu}}(\hat{\Omega}, X)$, (b) the unique parametrisation of tangent space elements by the orthonormality of the stochastic modes and the Dual DO condition. Specifically, the ``mass matrix'' of the tangent space (i.e., the matrix that realises the $L^2_{\hat{\mu}}(\hat{\Omega}, X)$ inner product on the basis introduced below)  is easy to invert. Taking a basis for the finite dimensional space $V_h = \mathrm{span}\{\phi^{i}\}_{i=1}^{N_h}$ and a basis for the stochastic space $ \mathrm{span}(\{Y_i\}_{i=0}^R \cup \{z^{j}\}_{j=1}^{N_C - R - 1})$ such that it forms an orthonormal basis of $L^2_{\hmu}(\hat{\Omega})$, a basis for the tangent space is given by $\{\phi^{i} Y_j  \} \cup \{U_l z^{k}\}$ with $j = 0,\ldots, R$, $l=1,\ldots,R$, $i=1, \ldots, N_h$, $k = 1, \ldots, N_C - R - 1 $. Arranging these test functions in the right order, the ``mass matrix'' of the tangent space is given by 
\begin{equation*}
	\mathbf{M}(\mathbf{u}) =
	\begin{bmatrix}
		I_{(R+1)} \otimes M_{V_h} & \mathbf{0} \\
		\mathbf{0} & M \otimes I_{N_C - R - 1},
	\end{bmatrix}
\end{equation*}
where $M_{V_h}$ is the mass matrix of the space $V_h$ obtained through the basis $\{\phi^i\}_{i=1}^{N_h}$, and $M$ is the Gram matrix $M_{ij} = \langle U_i, U_j \rangle_X$. Stacking the degrees of freedom of $\{\dot{U}_i\}_{i=0}^R \cup \{\dot{Y}_i\}_{i=1}^R$ in a vector $\dot{\mathbf{u}}$, solving problem~\eqref{eqn:dlr-weak-form} corresponds to solving the algebraic problem
\begin{equation*}
	\mathbf{M}(\mathbf{u}) \dot{\mathbf{u}} + \mathbf{A}(\mathbf{u}) = \mathbf{f}.
\end{equation*}
The inverse of $\mathbf{M}$ is easily computable as the matrix is block-diagonal and $(A \otimes B)^{-1} = A^{-1} \otimes B^{-1}$. Therefore,
the mass matrix can be explicitly inverted yielding explicit evolution equations for the individual modes. Note that the Kronecker-product structure is immediately inherited from the tensor-product scalar product, while the sparsity is inherited from the orthogonality of the stochastic basis. 

The tangent space skewed by $\Hcal$ in~\eqref{eqn:PG-start} permits to preserve (a) and (b) if $\Hcal$ verifies the conditions summarised below.

\begin{tancond}
	\leavevmode
	\begin{enumerate}[label=(C\arabic*)]
		\item \label{con:sep} $\Hcal$ is separable: $\Hcal = \Hcal_1 \otimes \Hcal_2$, where $\Hcal_1 : V_h \rightarrow X_h$ and $\Hcal_2 : L^2_{\hat{\mu}}(\hat{\Omega}) \rightarrow L^2_{\hat{\mu}}(\hat{\Omega})$. 
		\item \label{con:sym} $\Hcal_2$ is a symmetric, positive-definite operator on $\Lthm$ such that it induces a scalar product, in which case it would be natural to consider the alternative gauge condition 
\[
	\mathbb{E}_{\hmu}[\dot{Y}_i \Hcal_2 Y_j] = \mathbb{E}_{\Hcal_2}[\dot{Y_i} Y_j] = 0 \quad \forall 1 \leq i,j \leq R,
\]
instead of the standard one~\eqref{eqn:dualDOconds}.
\end{enumerate}
\end{tancond}

\begin{remark} \label{rem:noorthhyp}
	It is possible to forego either condition; then, one must effectively assemble the mass matrix and solve a system of size $R(N_h + N_C - R - 1) + N_h$ at each time step.
	This would imply constructing the orthonormal basis $\{z^{j}\}_{j=1}^{N_C - R}$ for the stochastic space at each time step. 
	The obtained update then being a tangent vector, one should also decide on a suitable retraction to the manifold, in the spirit of what was proposed in~\cite{kiva19}.
\end{remark}

The skewing conditions are quite restrictive and excludes even rather simple tangent space transformations, such as when 
\begin{equation} \label{eqn:sumHcal}
	\Hcal = \sum_{k=1}^K \Hcal_{1}^{(k)} \otimes \Hcal_{2}^{(k)}. 
\end{equation}
On the other hand, the form of these skewing operators is typically not overly complex when obtained from standard finite element stabilisations. 
In the case of SUPG and for $\bu$ deterministic, $\Hcal = (I+\alpha \bu \cdot \nabla) \otimes I = \Hcal_1 \otimes \Hcal_2$ naturally falls within this formulation. This stabilisation can also work for mildly stochastic transport, using the mean advection field as stabilising term, i.e. $\Hcal = (I + \alpha \mathbb{E}[\bu] \cdot \nabla) \otimes I$. Finally, for strongly stochastic advection fields, the proposed PG-DLR stabilisation approach is not expected to work well (which was verified through numerical examples not presented here) and it would then be of interest to consider the full stabilisation $\Hcal = I + \alpha \bu(x,\omega) \cdot \nabla$ by building and inverting the mass matrix at each iteration, as outlined in Remark~\ref{rem:noorthhyp}. This approach will not be addressed further in this work. 

The conditions~\ref{con:sep} and~\ref{con:sym} are specifically suited for the two-terms decomposition $UY^{\top}$ of the DO formalism, and not the three-term decomposition $\tilde{U}SV^{\top}$ found more commonly in the DLR literature~\cite{kolu07, luos14, celu22, eijo21, cosc23, cohu22}.
The absence of conditions on $\Hcal_1$ \modt{(as opposed to the condition on $\Hcal_2$ to be symmetric positive definite)} is a consequence of the fact that the deterministic modes $U$ are not subject to any constraint in the DO formalism, except that of linear independence. 

\begin{remark}
	It is also possible to swap the roles of the deterministic and stochastic modes, and impose the orthogonality conditions on the deterministic modes, implying that $\Hcal_1$ should be the operator that induces the new scalar product. However, many natural operators $\Hcal_1$ obtained from classical finite element stabilisations do not fall in this category.  
\end{remark}

\subsubsection{Uses of $\Hcal_2$}

In the basic framework developed here with a collocation method for the stochastic space, the possibility of skewing the stochastic space by some operator $\Hcal_2$ inducing a scalar product is not of much direct practical interest. It may however come useful in the field of Data Assimilation and particle filters, where the operator can act as a measure-reweighting operator. 

Modifying the stochastic space can also be beneficial for numerical stability when using a Galerkin method for the approximation of the random variables (e.g., polynomial approximations~\cite{doka02,ghsp91}), in the spirit of the approach in~\cite{dahusc12}. 
As an example, let $0 < c_{\min} \leq c(\omega) \leq c_{\max}$ for $\omega \in \Omega$, with $c_{\min} \ll 1$ and assume that for a significant subset $U$ of $\Omega$, $c(\omega) \ll 1$ . Furthermore, assume $L_{\mathrm{phys}}$ to be an operator that is coercive in some norm $\|\cdot\|_V$ stronger than $L^2(D)$, and with a moderate coercivity constant $\alpha$. Consider the problem
\begin{equation}
	\partial_t u + c(\omega) L_{\mathrm{phys}} u = f,
\end{equation}
with suitable initial and boundary values. 
Discretising $L^2_{\mu}(\Omega)$ by a Stochastic Galerkin method yields the problem : \textit{find } $u_N \in W$ \textit{s.t.}
\begin{equation} \label{eqn:toy-pb}
	(\partial_t u_N(\omega), w)_{X,L^2_{\mu}(\Omega)} + ( c(\omega) L_{\mathrm{phys}}  u_N(\omega), w)_{V'V,L^2_{\mu}(\Omega)}  = (f, w)_{X,L^2_{\mu}(\Omega)} ,\quad \forall w \in \tilde{W}.
\end{equation}	
The standard Galerkin method sets $W = \tilde{W} =  \mathbb{P}^N(\Omega, V)$ (a space of $V$-valued polynomial random variables) and testing against $u(\omega)$ yields a stability estimate of the form 
\begin{equation} \label{eqn:toy-prob-disc-1}
	\|u_N(T)\|^2_{L^2(D), L^2_{\mu}} + \alpha \int_{0}^T \|u_N(s)\|^2_{V,c} \mathrm{d}s 
	\leq 
	\|u_N(0)\|^2_{L^2(D), L^2_{\mu}}
	+
	\int_{0}^T \|f\|^2_{L^2_{\mu}} \mathrm{d}s,
\end{equation}
where $\| \cdot \|_{V,c} = \|c^{1/2} \cdot\|_{V,L^2_{\mu}}$. 
On the other hand, choosing $\tilde{W} = \Hcal P^N(\Omega, V) = \{ v/c(\omega) \;|\; v \in P^N(\Omega, V) \}$ and testing against $w = u(\omega) / c(\omega)$ yields the stability estimate
\begin{equation} \label{eqn:toy-prob-disc-2}
	\|u_N(T)\|^2_{L^2(D),c^{-1}} + \alpha \int_{0}^T \|u_N(s)\|_{V,L^2_{\mu}}^2 \mathrm{d}s 
	\leq 
	\|u_N(0)\|^2_{L^2(D),c^{-1}}
	+
	\int_{0}^T \|f\|^2_{L^2(D),c^{-1}} \mathrm{d}s.
\end{equation}

When performing a semi-discretisation by Galerkin method, error bounds for the error $e(t, \omega) = u(t,\omega) - u_h(t,\omega)$ can be obtained by writing an error equation for $e$ and re-using the stability estimate (replacing $u$ by $e$ in the left hand side of~\eqref{eqn:toy-prob-disc-1} and~\eqref{eqn:toy-prob-disc-2}). 

If for a significant subset $U$ of $\Omega$, $c(\omega)$ has a small value, the control over the
error term $\int_{0}^T \|e(s)\|^2_{V,c} \mathrm{d}s $
in~\eqref{eqn:toy-prob-disc-1} may not be very informative, as the error could
be relatively big in the region $U$. On the other hand, the corresponding error term in~\eqref{eqn:toy-prob-disc-2} uses the $V,L^2_{\mu}$-norm, hence
yielding a more uniform control in the $L^2((0,T), L^2(\Omega, V))$-norm. 


\subsection{PG-DLR equations}

In this section and the following ones, we restrict to the case $\Hcal_2 = I$, hence $\Hcal = \Hcal_1 \otimes I$.  
We are now in position of deriving practical equations to update the deterministic and stochastic modes of the PG-DLR problem~\eqref{eqn:PG-start}.
\begin{proposition}
	Evolution equations for PG-DLR (version 1). Find $\{\dot{U}_j\}_{j=0}^{R} \in (V_h)^{R+1}$ and $\{\dot{Y}_i\}_{i=1}^R \in (\mathcal{Y}^{\perp})^{R}$ such that
\begin{align}
	\lrX{\dot{U}_j, \Hcal_1 v_h} + \mathcal{A}_h(u, v_h Y_j )&= \mathcal{F}(\Hcal_1 v_h Y_j), &&\forall v_h \in V_h, \quad 0 \leq j \leq R, \label{eqn:PG-varf-detmodes-2} \\
	\sum_{i=1}^R\mathbb{E}_{\hmu}[\dot{Y}_i z] W_{ij} + \mathcal{A}_h(u, U_j \mathcal{P}^{\perp}_{\mathcal{Y}}z)  &=  \mathcal{F}(\Hcal_1 U_j \mathcal{P}_{\mathcal{Y}}^{\perp} z), &&\forall z \in \Lthm, \quad 1 \leq j \leq R \label{eqn:PG-varf-stochmodes-2},
\end{align}
where $W_{ij} = \lrX{U_i, \Hcal_1 U_j}$.
\end{proposition}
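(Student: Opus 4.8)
The plan is to start from the PG-DLR variational statement~\eqref{eqn:PG-start} with $\Hcal = \Hcal_1 \otimes I$ and to test it against two explicit families of tangent vectors which together span $\mathcal{T}_u \mathcal{M}_R$, thereby decoupling the physical and the stochastic updates. First I would record that a curve on $\mathcal{M}_R$ has velocity $\dot{u} = \dot{U}_0 Y_0 + \sum_{i=1}^R (\dot{U}_i Y_i + U_i \dot{Y}_i)$, and that the Dual DO gauge~\eqref{eqn:dualDOconds} (here with $\Hcal_2 = I$) forces $\dot{Y}_i \in \mathcal{Y}^{\perp}$ and $\mathbb{E}_{\hmu}[\dot{Y}_i Y_j] = 0$ for $1 \leq i,j \leq R$. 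This is precisely the reason the natural unknown is $\{\dot{Y}_i\} \in (\mathcal{Y}^{\perp})^R$.

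To obtain~\eqref{eqn:PG-varf-detmodes-2}, I would test against $v_{\hhmu} = v_h Y_j$ for arbitrary $v_h \in V_h$ and $0 \leq j \leq R$, which indeed lies in $\mathcal{T}_u \mathcal{M}_R$. By separability~\ref{con:sep}, $\Hcal(v_h Y_j) = (\Hcal_1 v_h) Y_j$, so the time-derivative term equals $\mathbb{E}_{\hmu}[\langle \dot{u}, \Hcal_1 v_h \rangle_X Y_j]$. Expanding $\dot{u}$ and using the orthonormality $\mathbb{E}_{\hmu}[Y_i Y_j] = \delta_{ij}$, the fact that $Y_0 = \mathbbm{1}$ is orthogonal to the zero-mean modes, and the gauge condition $\mathbb{E}_{\hmu}[\dot{Y}_i Y_j] = 0$, every cross term vanishes and the expression collapses to $\lrX{\dot{U}_j, \Hcal_1 v_h}$, which is the claimed left-hand side; the right-hand side $\mathcal{F}(\Hcal_1 v_h Y_j)$ follows from the same splitting of $\Hcal$, while $\mathcal{A}_h(u, v_h Y_j)$ carries over unchanged.

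Symmetrically, for~\eqref{eqn:PG-varf-stochmodes-2} I would test against $v_{\hhmu} = U_j \mathcal{P}^{\perp}_{\mathcal{Y}} z$ for arbitrary $z \in \Lthm$ and $1 \leq j \leq R$; since $\mathcal{P}^{\perp}_{\mathcal{Y}} z \in \mathcal{Y}^{\perp}$, this is again a legitimate tangent vector. The time-derivative term becomes $\mathbb{E}_{\hmu}[\langle \dot{u}, \Hcal_1 U_j \rangle_X \mathcal{P}^{\perp}_{\mathcal{Y}} z]$, and because $\mathcal{P}^{\perp}_{\mathcal{Y}} z$ is zero-mean and orthogonal to every $Y_i$, the $\dot{U}_0$ and $\dot{U}_i$ contributions die, leaving $\sum_{i=1}^R \langle U_i, \Hcal_1 U_j \rangle_X \, \mathbb{E}_{\hmu}[\dot{Y}_i \mathcal{P}^{\perp}_{\mathcal{Y}} z]$. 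Using $\dot{Y}_i \in \mathcal{Y}^{\perp}$ together with the self-adjointness of the orthogonal projection $\mathcal{P}^{\perp}_{\mathcal{Y}}$, I would replace $\mathbb{E}_{\hmu}[\dot{Y}_i \mathcal{P}^{\perp}_{\mathcal{Y}} z]$ by $\mathbb{E}_{\hmu}[\dot{Y}_i z]$, recovering $\sum_{i=1}^R \mathbb{E}_{\hmu}[\dot{Y}_i z]\, W_{ij}$ with $W_{ij} = \lrX{U_i, \Hcal_1 U_j}$, and the right-hand side $\mathcal{F}(\Hcal_1 U_j \mathcal{P}^{\perp}_{\mathcal{Y}} z)$ as before.

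The one genuinely structural step, which I would treat most carefully, is the equivalence of this reduced system with the original statement~\eqref{eqn:PG-start}: I must verify that the two families $\{v_h Y_j\}$ and $\{U_j \mathcal{P}^{\perp}_{\mathcal{Y}} z\}$ span all of $\mathcal{T}_u \mathcal{M}_R$, so that no information is lost in passing from~\eqref{eqn:PG-start} to~\eqref{eqn:PG-varf-detmodes-2}--\eqref{eqn:PG-varf-stochmodes-2}. This is where conditions~\ref{con:sep}--\ref{con:sym} (with $\Hcal_2 = I$) do the real work: as $v_h$ ranges over $V_h$ and $z$ over $\Lthm$ the two families reproduce every admissible variation $\delta u_j$ and $\delta y_j \in \mathcal{Y}^{\perp}$, and separability guarantees $\Hcal$ maps each basis tangent vector back into a tensor of the same split form. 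The remaining manipulations are the standard DLR bookkeeping; the single new feature relative to the orthogonal case is that the Gram matrix $M_{ij} = \lrX{U_i, U_j}$ is replaced by the generally non-symmetric matrix $W_{ij} = \lrX{U_i, \Hcal_1 U_j}$, whose invertibility, needed when one further reduces to explicit ODEs, I would tie to the well-posedness requirement $\mathrm{Im}(\Hcal) \simeq V_h \otimes \Lthm$ together with the linear independence of the $U_i$.
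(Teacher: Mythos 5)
Your proposal is correct and follows essentially the same route as the paper's own proof: test~\eqref{eqn:PG-start} against $v_h Y_j$ and $U_j \mathcal{P}^{\perp}_{\mathcal{Y}} z$, then collapse the time-derivative term using the orthonormality $\mathbb{E}_{\hmu}[Y_i Y_j] = \delta_{ij}$ and the Dual DO gauge condition. The additional remarks you make (that the two test families span $\mathcal{T}_u \mathcal{M}_R$, and that $\mathbb{E}_{\hmu}[\dot{Y}_i \mathcal{P}^{\perp}_{\mathcal{Y}} z] = \mathbb{E}_{\hmu}[\dot{Y}_i z]$ by self-adjointness of the projection) are correct and merely make explicit what the paper leaves implicit.
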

\begin{proof}
The equations are obtained by testing against respectively 
$v_h Y_i$ and $U_i z$ in~\eqref{eqn:PG-start}, where $v_h \in V_h$, $z \in \mathcal{Y}^{\perp}$. 
For~\eqref{eqn:PG-varf-detmodes-2}, testing against $v_h Y_j$, the first term becomes
\begin{multline}
	(\dot{U}_0 Y_0 + \sum_{i=1}^R \dot{U}_i Y_i + U_i \dot{Y}_i, \Hcal v_h Y_j)_{X, \Lthm} 
	=
	\langle  \mathbb{E}_{\hmu}[(\sum_{i=0}^R \dot{U}_i Y_i + U_i \dot{Y}_i)  Y_j] ,  \Hcal_1 v_h\rangle_{X} 
	\\
	= 
	\lrX{\dot{U}_j, \Hcal_1 v_h},
\end{multline}
having leveraged the orthogonality condition $\mathbb{E}_{\hmu}[Y_i Y_j] = \delta_{ij}$ and $\mathbb{E}_{\hmu}[Y_i \dot{Y}_j] = 0$. 
The other two terms already are as in~\eqref{eqn:PG-varf-detmodes-2}.
Equation \eqref{eqn:PG-varf-stochmodes-2} is obtained by testing against $U_j z$ for some $ z \in \mathcal{Y}^{\perp}$, or, equivalently, $U_j \mathcal{P}^{\perp}_{\mathcal{Y}} z$ with $z \in L^2_{\hmu}$ in~\eqref{eqn:PG-start}. The first term becomes
\begin{equation}
	(\dot{U}_0 Y_0 + \sum_{i=1}^R \dot{U}_i Y_i + U_i \dot{Y}_i, \Hcal U_j \mathcal{P}^{\perp}_{\mathcal{Y}}z)_{X, \Lthm} = 
	\sum_{i=1}^R \langle U_i, \Hcal_1 U_j \rangle_X \mathbb{E}_{\hmu}[{\dot{Y}_i z}]
	\\
	=
	\sum_{i=1}^R  \mathbb{E}_{\hmu}[{\dot{Y}_i z}] W_{ij},
\end{equation}
and again, the other two terms are already in the final form of~\eqref{eqn:PG-varf-stochmodes-2}.
\end{proof}

\subsection{Time-integration schemes} \label{subsubsec:pgdlrps}

A special care must be taken when performing time-discretisation of DLR systems, as standard time-integrators such as Runge-Kutta are known to be unstable~\cite{luki16}. 
Various time-integration schemes have been developed and studied in the DLR community, such as the Projector-Splitting scheme~\cite{luos14} or the BUG (or Unconventional) integrator~\cite{celu22}. 
In their basic form, those schemes work with a three-term decomposition $USV^{\top}$ and may not be straightforwardly adapted to the two-term decomposition $UY^{\top}$ used in the DO formalism. 
Here, we follow the approach proposed in~\cite{kavino21}, which relies on the approximation
\begin{equation}
	\frac{1}{\dt}\int_{t_n}^{t_{n+1}} \mathcal{A}_h(u(s), v) \mathrm{d}s \approx \mathcal{A}^1_h(u(t_n), v) + \mathcal{A}_h^2(u(t_{n+1}), v)
\end{equation}
This can represent any scheme among explicit Euler, implicit Euler, $\theta$-methods or IMEX-type schemes.  

In~\cite{kavino21}, the authors propose a staggered method, which computes first the deterministic modes, and then uses those to compute the update of the stochastic modes. 
It is not exactly equivalent to the famed Projector-Splitting scheme~\cite{luos14}, however it bears a very strong connection to it. 
The key difference resides in the fact that the former uses only one evaluation of the dynamics
(e.g. at $\uhr[n]$ for explicit Euler), while the latter sub-iterates in a splitting fashion.
\begin{alg} \label{alg:evaspl}
Given the solution $\uhr[n] = U_0^n Y_0 + \sum_{i=1}^R U_i^n Y_i^n$,
\begin{enumerate}
	\item Find $\tilde{U}^{n+1}_j \in \mathbb{R}^{N_h}$ such that
	\begin{multline}
		\dt^{-1}\lrX{ \tilde{U}^{n+1}_j - U^n_j, \Hcal_1 v_{h}} + \mathcal{A}_h^1(\uhr[n], v_h Y^n_j) + \mathcal{A}_h^2(\uhr[n+1], v_h Y^n_j) = \mathcal{F}(\Hcal_1 v_h Y^n_j), \\  \forall v_{h} \in V_h, \quad 0 \leq j \leq R. \label{eqn:PG-disc-varf-detmodes-3}
\end{multline}
	\item \label{itm:final-sol} Find $\widetilde{\delta{Y}}^{n+1}_j \in (\mathcal{Y}^{n})^{\perp}$ 
		such that
		\begin{multline} 
			\dt^{-1} \sum_{i=1}^R \mathbb{E}_{\hmu}[\widetilde{\delta Y}^{n+1}_j z] \tilde{W}^{n+1}_{ij} + \mathcal{A}_h^1(\uhr[n], \tilde{U}^{n+1} \mathcal{P}^{\perp}_{\mathcal{Y}^n}z) 
			+ \mathcal{A}_h^2(\uhr[n+1], \tilde{U}^{n+1}_j \mathcal{P}^{\perp}_{\mathcal{Y}^n} z) 
			\\
			= \mathcal{F}(\Hcal_1 \tilde{U}^{n+1}_j \mathcal{P}^{\perp}_{\mathcal{Y}^n}z),\quad
			\forall z \in \Lthm, \quad 1 \leq j \leq R. \label{eqn:PG-disc-vard-stochmodes-3}
		\end{multline} \label{itm:stoch-step}
	\item Set $\tilde{Y}^{n+1}_j = Y^n + \widetilde{\delta{Y}}^{n+1}_j$ for $ j = 1, \ldots, R$, \label{itm:set-Yn1}
	\item Find~$\{{U}^{n+1}_i\}_{i=1}^R$ and $\{Y^{n+1}_i\}_{i=1}^R$ such that $\sum_{i=1}^R  U^{n+1}_i Y^{n+1}_i = \sum_{i=1}^R \tilde{U}^{n+1}_i \tilde{Y}^{n+1}_i$ and $\mathbb{E}_{\hmu}[{Y^{n+1}_i Y^{n+1}_j}] = \delta_{ij}$.
		\label{itm:reorth-step}
	\item The new solution is given by $\uhr[n+1] = \tilde{U}^{n+1}_0 Y_0 + \sum_{i=1}^R U^{n+1}_i Y^{n+1}_i$.	
\end{enumerate}
\end{alg}
If $\tilde{W}^{n+1}$ were not invertible, one could instead solve~\eqref{eqn:PG-disc-vard-stochmodes-3} in a minimal-norm least squares fashion, as discussed in~\cite{kavino21}. 
In what follows however, we assume that $\tilde{W}^{n+1}$ is invertible for all $n$. 

In practice, it is not necessary to explicitly build the basis $(\mathcal{Y}^n)^{\perp} = \mathcal{P}^{\perp}_{\mathcal{Y}^n} (\Lthm)$ to solve~\eqref{eqn:PG-disc-vard-stochmodes-3}. 
Testing against $(\mathcal{Y}^n)^{\perp}$ is equivalent to (orthogonally) projecting the operator on $(\mathcal{Y}^n)^{\perp}$ by use of $\mathcal{P}^{\perp}_{\mathcal{Y}^n}$. Mathematically, 
\begin{equation}
	\mathcal{A}_h(u, U_i \mathcal{P}^{\perp}_{\mathcal{Y}^n} z) = (L_h u, U_i \mathcal{P}^{\perp}_{\mathcal{Y}^n} z) = \mathbb{E}_{\hmu}\left[ 
		\mathcal{P}^{\perp}_{\mathcal{Y}^n}[ \langle L_h u, U_i \rangle_{V_h'V_h} ] z 
	\right].
\end{equation}
Hence, as is done in Section~\ref{sec:supg-dlr} below, the problem can be directly solved in $\Lthm$. 

As was the case in~\cite{kavino21}, a discrete analog of the DO conditions holds for our discrete PG-DLR formulation,
and $u^n$, $u^{n+1}$ belong to the tangent space $\mathcal{T}_{\tilde{U}^{n+1}{Y^n}^{\top}} \mathcal{M}_{R,\hmu}$. This tool proves crucial when
establishing the discrete variational formulation. The proofs largely follow the ones in~\cite{kavino21}.

\begin{lemma} \label{lem:discdualdo}
	After one step of Algorithm~\ref{alg:evaspl}, the stochastic modes verify the following orthogonality properties: 
	\begin{enumerate}
		\item $\mathbb{E}_{\hmu}[{(\tilde{Y}^{n+1}_i - Y^n_i) Y^n_j}] = 0$ for $1 \leq i,j \leq R$, 
		\item $\mathbb{E}_{\hmu}[{\tilde{Y}^{n+1}_i Y^n_j}] = \delta_{ij}$. 
	\end{enumerate}
\end{lemma}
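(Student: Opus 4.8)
The plan is to read both orthogonality properties directly off the defining constraint of step~\ref{itm:stoch-step} in Algorithm~\ref{alg:evaspl}, namely that the stochastic increments $\widetilde{\delta Y}^{n+1}_j$ are sought in $(\mathcal{Y}^n)^{\perp}$, the orthogonal complement of $\mathcal{Y}^n = \mathrm{span}\{Y^n_i\}_{i=1}^R$ inside $L^2_{\hmu,0}(\hat{\Omega})$. This membership is precisely the discrete counterpart of the Dual DO gauge~\eqref{eqn:dualDOconds}, with $\widetilde{\delta Y}^{n+1}_j$ playing the role of $\dt\,\dot{Y}_j$ and the constraint $\widetilde{\delta Y}^{n+1}_j \in (\mathcal{Y}^n)^{\perp}$ replacing the continuous condition $\dot{Y}_j \in \mathcal{Y}^{\perp}$. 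The two claims then reduce to bookkeeping with the empirical inner product.

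For the first identity, I would use that $\widetilde{\delta Y}^{n+1}_i \in (\mathcal{Y}^n)^{\perp}$ gives $\mathbb{E}_{\hmu}[\widetilde{\delta Y}^{n+1}_i\, z] = 0$ for every $z \in \mathcal{Y}^n$, in particular for $z = Y^n_j$ with $1 \leq j \leq R$. Combined with the update rule of step~\ref{itm:set-Yn1}, which yields $\tilde{Y}^{n+1}_i - Y^n_i = \widetilde{\delta Y}^{n+1}_i$, this gives
\[
	\mathbb{E}_{\hmu}[(\tilde{Y}^{n+1}_i - Y^n_i) Y^n_j] = \mathbb{E}_{\hmu}[\widetilde{\delta Y}^{n+1}_i\, Y^n_j] = 0, \quad 1 \leq i,j \leq R,
\]
establishing claim~1.

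For the second identity, I would expand $\tilde{Y}^{n+1}_i = Y^n_i + \widetilde{\delta Y}^{n+1}_i$ and use bilinearity of the empirical inner product:
\[
	\mathbb{E}_{\hmu}[\tilde{Y}^{n+1}_i\, Y^n_j] = \mathbb{E}_{\hmu}[Y^n_i\, Y^n_j] + \mathbb{E}_{\hmu}[\widetilde{\delta Y}^{n+1}_i\, Y^n_j].
\]
The first term equals $\delta_{ij}$, since $\uhr[n] \in \mathcal{M}_{R}$ forces orthonormality of its stochastic modes, while the second term vanishes by claim~1; hence claim~2 follows.

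There is no genuine obstacle here: both properties are immediate algebraic consequences of the orthogonality constraint imposed on the increments in the algorithm, together with the orthonormality of the incoming modes $\{Y^n_i\}$. The one point worth emphasising is conceptual rather than computational — recognising that seeking $\widetilde{\delta Y}^{n+1}_j$ in $(\mathcal{Y}^n)^{\perp}$ is exactly the discrete gauge-fixing that makes $u^n$ and $u^{n+1}$ share the common tangent space $\mathcal{T}_{\tilde{U}^{n+1}{Y^n}^{\top}}\mathcal{M}_{R}$, which is in turn what unlocks the discrete variational formulation used in the subsequent stability analysis.
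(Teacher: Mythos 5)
Your proof is correct and follows exactly the paper's own argument: the paper likewise deduces both properties immediately from $\widetilde{\delta Y}^{n+1}_j \in (\mathcal{Y}^n)^{\perp}$ together with the orthonormality of $\{Y^n_i\}_{i=1}^R$, and your write-up is simply the spelled-out version of that one-line reasoning.
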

\begin{proof}
	1. and 2. follow immediately from the fact that $\widetilde{\delta Y}^{n}_j \in (\mathcal{Y}^n)^{\perp}$, and $\{Y_i\}_{i=1}^R$ is assumed orthonormal. 
\end{proof}

\begin{theorem} \label{th:disc-varf}
The following discrete variational formulation holds: 
\begin{multline} \label{eqn:discrete-varf-pg-2}
	\frac{1}{\dt}(\uhr[n+1] - \uhr[n], \Hcal v_{\hhmu})_{X,\Lthm} 
	+ \mathcal{A}_h^1(\uhr[n], v_{\hhmu}) + \mathcal{A}_h^2(\uhr[n+1], v_{\hhmu}) = \mathcal{F}(\Hcal v_{\hhmu}), \\ 
	\forall v_{\hhmu} \in \mathcal{T}_{\tilde{U}^{n+1}{(Y^{n})^\top}} \mathcal{M}_R,
\end{multline}

\end{theorem}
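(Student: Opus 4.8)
The plan is to verify~\eqref{eqn:discrete-varf-pg-2} by testing against the two families that span the tangent space $\mathcal{T}_{\tilde{U}^{n+1}(Y^n)^\top}\mathcal{M}_R$: the \emph{physical} directions $v_h Y^n_j$ (with $v_h \in V_h$, $0 \le j \le R$) and the \emph{stochastic} directions $\tilde U^{n+1}_j \mathcal{P}^\perp_{\mathcal{Y}^n} z$ (with $z \in \Lthm$, $1 \le j \le R$), the latter sweeping out all $\tilde U^{n+1}_j \delta y_j$ with $\delta y_j \in (\mathcal{Y}^n)^\perp$. Since~\eqref{eqn:discrete-varf-pg-2} is linear in $v_{\hhmu}$, it suffices to establish it on each family and conclude by linearity over the spanning set. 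The key preliminary observation, already recorded before the statement, is that both $\uhr[n]$ and $\uhr[n+1]$ lie in this linear tangent space: $\uhr[n] = U^n_0 Y_0 + \sum_i U^n_i Y^n_i$ has stochastic modes $Y^n$, while the reorthonormalisation step~\ref{itm:reorth-step} preserves the product so that $\uhr[n+1] = \tilde U^{n+1}_0 Y_0 + \sum_i \tilde U^{n+1}_i \tilde Y^{n+1}_i$ with $\tilde Y^{n+1}_i = Y^n_i + \widetilde{\delta Y}^{n+1}_i$ and $\widetilde{\delta Y}^{n+1}_i \in (\mathcal{Y}^n)^\perp$. Hence the increment $\uhr[n+1]-\uhr[n]$ is itself a tangent vector, which is what makes the inner products below collapse cleanly.

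In each case the terms $\mathcal{A}_h^1(\uhr[n],\cdot)$, $\mathcal{A}_h^2(\uhr[n+1],\cdot)$ and $\mathcal{F}(\Hcal\,\cdot)$ already coincide termwise with those in~\eqref{eqn:PG-disc-varf-detmodes-3}--\eqref{eqn:PG-disc-vard-stochmodes-3}, so the only thing left to check is that the discrete time-derivative term $\dt^{-1}(\uhr[n+1]-\uhr[n], \Hcal v_{\hhmu})_{X,\Lthm}$ reproduces the corresponding mass term of Algorithm~\ref{alg:evaspl}. Using $\Hcal = \Hcal_1 \otimes I$, so that $\Hcal(v_h Y^n_j) = (\Hcal_1 v_h) Y^n_j$, I would expand $\uhr[n+1]-\uhr[n]$ in the two-term form above and evaluate the $X,\Lthm$ inner product through its tensor structure. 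For a physical test function I would use $\mathbb{E}_{\hmu}[Y^n_i Y^n_j] = \delta_{ij}$ together with the discrete Dual DO relation $\mathbb{E}_{\hmu}[\tilde Y^{n+1}_i Y^n_j] = \delta_{ij}$ of Lemma~\ref{lem:discdualdo}, the $Y_0$-contributions dropping out by zero-meanness of the nonzero modes; this reduces the term exactly to $\dt^{-1}\lrX{\tilde U^{n+1}_j - U^n_j, \Hcal_1 v_h}$, matching~\eqref{eqn:PG-disc-varf-detmodes-3}.

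For a stochastic test function $\tilde U^{n+1}_j \mathcal{P}^\perp_{\mathcal{Y}^n} z$, I would expand the inner product similarly. The $Y_0$ contribution and the $\sum_i U^n_i Y^n_i$ contribution vanish because $\mathcal{P}^\perp_{\mathcal{Y}^n} z$ is zero-mean and orthogonal to $\mathcal{Y}^n$; what survives is $\sum_i \lrX{\tilde U^{n+1}_i, \Hcal_1 \tilde U^{n+1}_j}\, \mathbb{E}_{\hmu}[\widetilde{\delta Y}^{n+1}_i \mathcal{P}^\perp_{\mathcal{Y}^n} z]$. Since $\widetilde{\delta Y}^{n+1}_i \in (\mathcal{Y}^n)^\perp$ and $\mathcal{P}^\perp_{\mathcal{Y}^n}$ is the orthogonal projection onto $(\mathcal{Y}^n)^\perp$ (hence self-adjoint and the identity there), I may replace $\mathcal{P}^\perp_{\mathcal{Y}^n} z$ by $z$ inside the expectation, recovering $\dt^{-1}\sum_i \tilde W^{n+1}_{ij}\,\mathbb{E}_{\hmu}[\widetilde{\delta Y}^{n+1}_i z]$ with $\tilde W^{n+1}_{ij} = \lrX{\tilde U^{n+1}_i, \Hcal_1 \tilde U^{n+1}_j}$, matching~\eqref{eqn:PG-disc-vard-stochmodes-3}.

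The main delicacy, and the step I would treat most carefully, is the bookkeeping of the orthogonality relations, since the intermediate modes $\tilde Y^{n+1}_i$ are \emph{not} mutually orthonormal: the identity rests on the \emph{one-sided} relation $\mathbb{E}_{\hmu}[\tilde Y^{n+1}_i Y^n_j] = \delta_{ij}$ and on the fact that step~\ref{itm:reorth-step} preserves the product $\sum_i \tilde U^{n+1}_i \tilde Y^{n+1}_i$, rather than on any orthogonality among the $Y^{n+1}_i$ or the $\tilde Y^{n+1}_i$ themselves. Once these are correctly invoked, the matching of the mass terms is immediate, and the variational formulation follows by linearity over the two spanning families of the tangent space.
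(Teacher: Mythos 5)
Your proof is correct and is essentially the paper's own argument run in reverse: the paper sums the scheme equations~\eqref{eqn:PG-disc-varf-detmodes-3}--\eqref{eqn:PG-disc-vard-stochmodes-3} over $j$ against arbitrary test components and shows the two cross-terms vanish, whereas you restrict the claimed variational form to the two spanning families of $\mathcal{T}_{\tilde{U}^{n+1}(Y^n)^\top}\mathcal{M}_R$ and collapse each mass term back to the scheme equations. The ingredients are identical in both directions -- the tensor structure $\Hcal = \Hcal_1 \otimes I$, the one-sided relation $\mathbb{E}_{\hmu}[\tilde{Y}^{n+1}_i Y^n_j] = \delta_{ij}$ from Lemma~\ref{lem:discdualdo}, the orthogonality of $\widetilde{\delta Y}^{n+1}_i$ to $\mathcal{Y}^n$, and the product-preservation of the reorthonormalisation step -- so this is the same approach, differing only in bookkeeping.
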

\begin{proof}
	Start by rewriting the first term of~\eqref{eqn:PG-disc-varf-detmodes-3} as $\dt^{-1} (\tilde{U}^{n+1}_j Y^n_j - U^n_j Y^n_j, \Hcal_1 v_h^{(j)} Y^n_j)_{X,\Lthm}$ for $1 \leq j \leq R$. 
	Owing to the orthogonality of the $Y_j$'s, summing~\eqref{eqn:PG-disc-varf-detmodes-3} over $j = 0, \ldots, R$ yields 
	\begin{multline} \label{eqn:varf-proof-1}
		\dt^{-1}\sum_{j=0}^R (\tilde{U}^{n+1}_j \hat{Y}^n_j - U^n_j \hat{Y}^n_j, \Hcal V (\hat{Y}^n)^{\top})_{X,\Lthm} 
		+ \mathcal{A}_h^1(\uhr[n], V (\hat{Y}^n)^{\top}) 
		+ \mathcal{A}_h^2(\uhr[n+1], V (\hat{Y}^n)^{\top})
		\\
		= \mathcal{F}(\Hcal V (\hat{Y}^n)^{\top}),
	\end{multline}
	where $V = (v_h^{(0)}, \ldots, v_h^{(R)})$ and $\hat{Y}^n = (Y^n_0, \ldots, Y^n_R)$.
	Similarly for~\eqref{eqn:PG-disc-vard-stochmodes-3}, we rewrite the first term as $\sum_{i=1}^R (\tilde{U}^{n+1}_i \tilde{Y}^{n+1}_i - \tilde{U}^{n+1}_i Y^n_i, \Hcal \tilde{U}^{n+1}_j z^{(j)})_{X,\Lthm}$. Again summing over $j$ yields
	\begin{multline} \label{eqn:varf-proof-2}
		\dt^{-1} \sum_{i=1}^R (\tilde{U}^{n+1}_i \tilde{Y}^{n+1}_i - \tilde{U}^{n+1}_i Y^n_i, \Hcal \tilde{U}^{n+1} Z^{\top})_{X,\Lthm}   
		+ \mathcal{A}_h^1(\uhr[n], \tilde{U}^{n+1} Z^{\top}) 
		+ \mathcal{A}_h^2(\uhr[n+1], \tilde{U}^{n+1} Z^{\top})
		\\
		= \mathcal{F}(\Hcal \tilde{U}^{n+1} Z^{\top}),
	\end{multline}
	where $Z = (\mathcal{P}^{\perp}_{\mathcal{Y}^n} z^{(1)}, \ldots, \mathcal{P}^{\perp}_{\mathcal{Y}^n} z^{(R)})$.  
	Now define $v_{\hhmu} = V (\hat{Y}^n)^{\top} + \tilde{U}^{n+1} Z^{\top} \in \mathcal{T}_{\tilde{U}^{n+1} (Y^n)^{\top}} \mathcal{M}_R$. 
	Summing~\eqref{eqn:varf-proof-1} and~\eqref{eqn:varf-proof-2} finishes the proof. 
	Indeed, denoting $\hat{u}^n = \tilde{U}^{n+1}_0 Y_0 + \sum_{j=1}^R \tilde{U}^{n+1}_j Y^{n}_j$, 
	the sum of the first two terms in~\eqref{eqn:varf-proof-1} and~\eqref{eqn:varf-proof-2} verifies
	\begin{multline}
\sum_{j=1}^R (\tilde{U}^{n+1}_j \tilde{Y}^{n+1}_j - \tilde{U}^{n+1}_j Y^n_j, \Hcal \tilde{U}^{n+1} Z^{\top})_{X,\Lthm} 
+
\sum_{j=0}^R (\tilde{U}^{n+1}_j \hat{Y}^n_j - U^n_j \hat{Y}^n_j, \Hcal V (\hat{Y}^n)^{\top})_{X,\Lthm} 
\\
= ( (\uhr[n+1] - \hat{u}^n)^{\star}, \Hcal \tilde{U}^{n+1} Z^{\top})_{X,\Lthm} + (\hat{u}^n - \uhr[n], \Hcal V (\hat{Y}^n)^{\top})_{X,\Lthm}
= (\uhr[n+1] - \uhr[n], v_{\hhmu}),
	\end{multline}
	as the cross-terms $( (\uhr[n+1] - \hat{u}^n)^{\star}, \Hcal V (\hat{Y}^n)^{\top})_{X,\Lthm}$ and $(\hat{u}^n - \uhr[n], \Hcal \tilde{U}^{n+1} Z^{\top})_{X,\Lthm}$ vanish:
	the former because $(\uhr[n+1] - \hat{u}^n)^{\star} = \sum_{i=1}^R \tilde{U}^{n+1}_i (\tilde{Y}^{n+1}_i - Y^n_i)$, and leveraging the orthogonality of $\tilde{Y}^{n+1} - Y^n$ and $Y^n$ by Lemma~\ref{lem:discdualdo}; 
	the latter because $\hat{u}^n - \uhr[n] = \sum_{i=0}^R \tilde{U}^{n+1}_i Y^n_i$, and $Z \subset (\mathcal{Y}^n)^{\perp}$.
Finally, $(\uhr[n+1] - \hat{u}^n)^{\star} + \hat{u}^n - \uhr[n] = \uhr[n+1] - \uhr[n]$. 
\end{proof}
Crucially, the following lemma holds.
\begin{lemma}
	Let $\uhr[n+1] = \tilde{U}_0^{n+1} Y_0 + \sum_{i=1}^R \tilde{U}^{n+1}_i \tilde{Y}^{n+1}_i$ be the output of Algorithm~\ref{alg:evaspl} after one step,
	starting from $\uhr[n] = U^n_0 Y^n_0 + \sum_{i=1}^R U^n_i Y^n_i$. Let $\hat{u} = \sum_{i=0}^R \tilde{U}_i^{n+1} Y^n_i$. 
	Then $\uhr[n]$, $\uhr[n+1] \in \mathcal{T}_{\hat{u}}\mathcal{M}_{R}$. 
\end{lemma}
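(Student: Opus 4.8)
The plan is to verify directly that both $\uhr[n]$ and $\uhr[n+1]$ admit a representation as tangent vectors at $\hat{u}$, using the explicit characterisation of $\mathcal{T}_{\hat{u}}\mathcal{M}_R$. Observe first that the tangent space at $\hat{u} = \sum_{i=0}^R \tilde{U}^{n+1}_i Y^n_i$ is the one built from the physical modes $\tilde{U}^{n+1}_i$ and the (fixed) stochastic modes $Y^n_i$, so that its elements read $\delta u_0 Y^n_0 + \sum_{i=1}^R(\delta u_i Y^n_i + \tilde{U}^{n+1}_i \delta y_i)$, subject to $\{\delta u_i\}_{i=0}^R \subset V_h$, $\{\delta y_i\}_{i=1}^R \subset L^2_{\hmu,0}(\hat{\Omega})$ and the Dual DO condition $\mathbb{E}_{\hmu}[\delta y_i Y^n_j]=0$ for $1 \le i,j \le R$. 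It therefore suffices, for each field, to exhibit admissible coefficients $(\delta u_i,\delta y_i)$ and to check these three defining conditions.

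For $\uhr[n] = U^n_0 Y^n_0 + \sum_{i=1}^R U^n_i Y^n_i$ the representation is immediate: since $\uhr[n]$ and $\hat{u}$ share the same stochastic modes $Y^n_i$ and the same fixed mean mode $Y_0 = Y^n_0 = \mathbbm{1}$, one takes $\delta u_i = U^n_i$ and $\delta y_i = 0$. The membership $U^n_i \in V_h$ gives the first condition, while the second and third are trivial for $\delta y_i = 0$, whence $\uhr[n] \in \mathcal{T}_{\hat{u}}\mathcal{M}_R$.

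For $\uhr[n+1]$ the essential ingredient is step~\ref{itm:set-Yn1} of Algorithm~\ref{alg:evaspl}, namely $\tilde{Y}^{n+1}_i = Y^n_i + \widetilde{\delta Y}^{n+1}_i$. Substituting this into $\uhr[n+1] = \tilde{U}^{n+1}_0 Y_0 + \sum_{i=1}^R \tilde{U}^{n+1}_i \tilde{Y}^{n+1}_i$ and using $Y_0 = Y^n_0$ yields
\[
  \uhr[n+1] = \sum_{i=0}^R \tilde{U}^{n+1}_i Y^n_i + \sum_{i=1}^R \tilde{U}^{n+1}_i \widetilde{\delta Y}^{n+1}_i,
\]
which is precisely the canonical tangent-vector form at $\hat{u}$ with $\delta u_i = \tilde{U}^{n+1}_i$ and $\delta y_i = \widetilde{\delta Y}^{n+1}_i$. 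Here $\tilde{U}^{n+1}_i \in V_h$ by construction in~\eqref{eqn:PG-disc-varf-detmodes-3}, while both the zero-mean property $\widetilde{\delta Y}^{n+1}_i \in L^2_{\hmu,0}(\hat{\Omega})$ and the Dual DO orthogonality $\mathbb{E}_{\hmu}[\widetilde{\delta Y}^{n+1}_i Y^n_j]=0$ follow from the constraint $\widetilde{\delta Y}^{n+1}_i \in (\mathcal{Y}^n)^\perp$ imposed in~\eqref{eqn:PG-disc-vard-stochmodes-3} and recorded in Lemma~\ref{lem:discdualdo}. Hence $\uhr[n+1] \in \mathcal{T}_{\hat{u}}\mathcal{M}_R$.

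I do not expect a genuine obstacle: the statement is in essence a bookkeeping identity, and the only points deserving care are (i) recognising $\hat{u}$ as the natural pivot that shares the stochastic modes of $\uhr[n]$ and the physical modes of $\uhr[n+1]$, and (ii) invoking Lemma~\ref{lem:discdualdo} to certify that the stochastic increment $\widetilde{\delta Y}^{n+1}$ is a legitimate Dual DO direction orthogonal to $\mathcal{Y}^n$. This is exactly the property that makes the common test space in Theorem~\ref{th:disc-varf} well-defined.
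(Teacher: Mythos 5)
Your proof is correct and is precisely the ``direct check'' that the paper's proof alludes to: rewriting $\uhr[n+1]$ via $\tilde{Y}^{n+1}_i = Y^n_i + \widetilde{\delta Y}^{n+1}_i$ and verifying the Dual DO conditions through $\widetilde{\delta Y}^{n+1}_i \in (\mathcal{Y}^n)^{\perp}$, with $\uhr[n]$ handled trivially as $\delta y_i = 0$. Nothing is missing; you have simply made explicit what the paper leaves to the reader.
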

\begin{proof}
	The proof is a direct check. 
\end{proof}

Just as in~\cite{kavino21}, one remarkable feature of the variational formulation~\eqref{eqn:discrete-varf-pg-2} is that this result holds independently of the potential ill-conditioning of the matrix $\tilde{W}^{n+1}$ that appears in step $2$ of Algorithm~\ref{alg:evaspl}. 
As $\tilde{W}^{n+1}$ is the ``skewed'' Gram matrix obtained from the basis $\{\tilde{U}^{n+1}_i\}_{i=1}^R$, this situation naturally arises when the chosen rank is such that some modes are close to be linearly dependent, i.e., redundant (rank over-approximation).   
However, through Theorem~\ref{th:disc-varf} and upon choosing a suitable time-discretisation, the numerical PG-DLR solutions can readily be proven to be (norm-)stable even in the presence of small singular values in $\tilde{W}^{n+1}$; this will be illustrated below for the case of SUPG-DLR. Algorithm~\ref{alg:evaspl} is thus understood to be robust to the smallest singular values.

\section{SUPG-stabilised DLR} \label{sec:supg-dlr}

This section details the application of the PG-DLR framework to the case of SUPG-stabilisation of unsteady advection-diffusion-reaction equations.


Throughout this section, we consider the case~$\bu$ deterministic, which alows to explicitly characterise the update equations of the physical and stochastic modes. 
Two time-discretisations are of interest, the implicit Euler and a semi-implicit one. 
The implicit Euler scheme, while being fully non-linear and computationally very heavy, brings about the idea of the discretised equations, and furthermore enjoys desirable norm-stability properties. 
The semi-implicit scheme, while more cumbersome to write, yields a practical scheme to compute the iterations of the DLR system, by reducing it to a sequence of linear systems to solve.

\subsubsection{Implicit Euler}

When using Algorithm~\ref{alg:evaspl}, the implicit Euler scheme corresponds to the choice 
$\mathcal{A}^1_h(u,v) = 0$, and $\mathcal{A}^2_h(u,v) = \aSUPG(u,v)$.
The equations for the DLR modes then read

\textit{Given $\uhr[n] = U_0^n Y_0^n + \sum_{i=1}^R U^n_i Y^n_i$, the solution at time $t_{n+1}$ is given by}

\textit{1. Deterministic modes: for $0 \leq j \leq R$, find $\tilde{U}^{n+1}_j \in V_h$ such that}
\begin{multline} \label{eqn:detmodes-adr-imp}
	\lrX{\frac{\tilde{U}^{n+1}_j - U^n_j}{\triangle t},  v_h} +  
	\lrX{\mathbb{E}[\varepsilon \nabla \uhr[n+1] Y^n_j], \nabla v_h} 
	+\lrX{ \mathbb{E}[\bu \cdot \nabla \uhr[n+1] Y^n_j], v_h} 
	+ \lrX{\mathbb{E}[c \uhr[n+1] Y^n_j], v_{\hhmu}}
	\\ 
	+ \sum_{K \in \Th} \frac{\delta_K}{\dt}\langle \tilde{U}^{n+1}_j - U^n_j, \bu \cdot \nabla v_h\rangle_K 
	+ \delta_K \langle \mathbb{E}[(- \varepsilon \Delta \uhr[n+1] + \bu \cdot \nabla \uhr[n+1] + c\uhr[n+1]) Y^n_j ] , \bu \cdot \nabla v_h \rangle_K 
	\\
	= \lrX{\mathbb{E}[f Y^n_j], v_h} + \sum_{K \in \Th} \delta_K \lrX{\mathbb{E}[f Y^n_j], \bu \cdot \nabla v_h} \quad \forall v_h \in V_h,   
\end{multline}

\textit{2. Stochastic modes: for $1 \leq j \leq R$, find $\tilde{Y}^{n+1}_j \in \mathbb{R}^{N_C}$ such that}
\begin{multline} \label{eqn:stochmodes-adr-imp}
 \sum_{i=1}^R	\frac{\tilde{Y}^{n+1}_i - Y^n_i}{\dt} \tilde{W}^{n+1}_{ij} 	
	+ \mathcal{P}_{\mathcal{Y}}^{\perp} [ \lrX{\varepsilon \nabla \uhr[n+1], \nabla \tilde{U}^{n+1}_j} 
	+ \lrX{\bu \cdot \nabla \uhr[n+1], \tilde{U}^{n+1}_j} 
	+ \lrX{c \uhr[n+1], \tilde{U}^{n+1}_j} 
	\\
	\sum_{K \in \Th}  \langle - \varepsilon \Delta \uhr[n+1] + \bu \cdot \nabla \uhr[n+1] + c \uhr[n+1], \bu \cdot \nabla \tilde{U}^{n+1}_j \rangle_K ] 
	\\
	= \mathcal{P}_{\mathcal{Y}}^{\perp}[ \lrX{f, \tilde{U}^{n+1}_j} 
	+
	\sum_{K \in \Th} \delta_K \langle f, \bu \cdot \nabla \tilde{U}^{n+1}_j \rangle_K 
	] \quad \mathrm{in} \quad \Lthm, 
\end{multline}
where $\tilde{W}^{n+1}_{ij} = \lrX{\tilde{U}^{n+1}_i, \tilde{U}^{n+1}_j} + \sum_{K \in \Th} \delta_K \langle \tilde{U}^{n+1}_i, \bu \cdot \nabla \tilde{U}^{n+1}_j \rangle_K$ for $1 \leq i,j \leq R$.

\textit{3. Orthonormalise basis : find $\{U_i^{n+1}\}_{i=0}^R$, $\{Y_i^{n+1}\}_{i=1}^R$ such that $U^{n+1}_0 = \tilde{U}^{n+1}_0$, 
	$\sum_{j=1}^R U^{n+1}_j Y^{n+1}_j = \sum_{j=1}^R \tilde{U}^{n+1}_j \tilde{Y}^{n+1}_j$, 
	and $\mathbb{E}_{\hmu}[Y^{n+1}_i Y^{n+1}_j] = \delta_{ij}$. 
}

The DLR iterates $\{\uhr[n]\}_{n=0}^N$ verify the variational formulation:  
\begin{multline} \label{eqn:varf-adr-disc-ie}
	\frac{1}{\dt}(\uhr[n+1] - \uhr[n], v_{\hhmu}) + \aSUPG(\uhr[n+1], v_{\hhmu}) + 	
	\sum_{K \in \Th} \frac{\delta_K}{\dt} (\uhr[n+1] - \uhr[n], \bu \cdot \nabla v_{\hhmu})_K  
	\\ 
	= (f, v_{\hhmu}) + \sum_{K \in \Th} \delta_K (f, \bu \cdot \nabla  v_{\hhmu})_K,  \quad \forall v_{\hhmu} =  v_{\hhmu} \in \mathcal{T}_{\!\!\tilde{U}^{n+1} (Y^n)^{\top}}\mathcal{M}_R.
\end{multline}
\subsubsection{Semi-Implicit}

The semi-implicit scheme, originally proposed in~\cite{kavino21}, is particularly interesting in the cases of mild stochasticity, as it leads to a cost similar to the explicit Euler scheme while featuring increased stability. 
\modt{It leverages the fact that a part of the stochastic space is fixed, which allows to treat a part of the right hand side implicitly when integrating without it inducing a coupled non-linear system; this leads to better norm-stability properties of the scheme.}
In our context, the scheme takes the form
\begin{align*}
	\mathcal{A}_h^1(u, v) &= (\varepsilon^{\star} \nabla u, \nabla v) 
	+ (c^{\star} u, v) + \sum_{K \in \Th} \delta_K (- \varepsilon^{\star} \Delta u 
	+  c^{\star}u, \bu \cdot \nabla v)_K 
	\\
	\mathcal{A}_h^2(u, v) &= (\bar{\varepsilon} \nabla u, \nabla v) + (\bu \cdot \nabla u, v) + (\bar{c} u, v) + \sum_{K \in \Th} \delta_K (- \bar{\varepsilon} \Delta u + \bu \nabla u + \bar{c}u, \bu \cdot \nabla v)_K, 
\end{align*}
where $\bar{\varepsilon} = \mathbb{E}_{\hmu}[\varepsilon]$, $\varepsilon^{\star} = \varepsilon - \bar{\varepsilon}$ (and similarly for $\bar{c}$ and $c^{\star}$).

As this version is used in our simulations, we present the practical evolution equations, after introducing two short lemmas that clarify how these equations are obtained: 
\begin{lemma}
	Assume that $\mathcal{W} : V_h \rightarrow V_h^{\prime}$ is a deterministic operator. Then, 
	\begin{equation}		\mathbb{E}[{ \langle \mathcal{W} \uhr[n+1], v \rangle_{V_h^{\prime}V_h} \mathcal{P}_{\mathcal{Y}}^{\perp} z}] = \mathbb{E}[{ \langle \mathcal{W} \tilde{U}^{n+1} (\tilde{Y}^{n+1} - Y^n)^{\top},v \rangle_{V_h^{\prime} V_h} z }] 
	\end{equation}
\end{lemma}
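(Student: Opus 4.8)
The plan is to reduce the claim to a short computation resting on three ingredients: the linearity and determinism of $\mathcal{W}$, the self-adjointness of the orthogonal projection $\mathcal{P}^{\perp}_{\mathcal{Y}}$ on $\Lthm$, and the discrete Dual DO orthogonality of Lemma~\ref{lem:discdualdo}.

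First I would expand the solution as $\uhr[n+1] = \tilde{U}^{n+1}_0 Y_0 + \sum_{i=1}^R \tilde{U}^{n+1}_i \tilde{Y}^{n+1}_i$ and, since $\mathcal{W}$ is a deterministic linear operator, pull the (random) coefficients out to write the random variable
\[
	g \coloneqq \langle \mathcal{W}\uhr[n+1], v\rangle_{V_h^{\prime}V_h} = \langle \mathcal{W}\tilde{U}^{n+1}_0, v\rangle_{V_h^{\prime}V_h}\,\mathbbm{1} + \sum_{i=1}^R \langle \mathcal{W}\tilde{U}^{n+1}_i, v\rangle_{V_h^{\prime}V_h}\,\tilde{Y}^{n+1}_i .
\]
The left-hand side of the identity is then simply $\lrr{g, \mathcal{P}^{\perp}_{\mathcal{Y}}z}$.

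Next I would transfer the projection onto the first factor using that $\mathcal{P}^{\perp}_{\mathcal{Y}}$ is an orthogonal, hence self-adjoint, projection on $\Lthm$, so that $\lrr{g, \mathcal{P}^{\perp}_{\mathcal{Y}}z} = \lrr{\mathcal{P}^{\perp}_{\mathcal{Y}}g, z}$, and compute $\mathcal{P}^{\perp}_{\mathcal{Y}}g$ termwise. The constant term in $g$ is annihilated because $\mathcal{P}^{\perp}_{\mathcal{Y}}$ subtracts the mean; for each remaining term I would invoke the decomposition $\tilde{Y}^{n+1}_i = Y^n_i + \widetilde{\delta Y}^{n+1}_i$ with $\widetilde{\delta Y}^{n+1}_i = \tilde{Y}^{n+1}_i - Y^n_i \in (\mathcal{Y}^n)^{\perp}$ by construction in Algorithm~\ref{alg:evaspl}, so that $\mathcal{P}^{\perp}_{\mathcal{Y}} Y^n_i = 0$ and $\mathcal{P}^{\perp}_{\mathcal{Y}}\widetilde{\delta Y}^{n+1}_i = \tilde{Y}^{n+1}_i - Y^n_i$; this is exactly the content of Lemma~\ref{lem:discdualdo}. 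Hence $\mathcal{P}^{\perp}_{\mathcal{Y}}g = \sum_{i=1}^R \langle \mathcal{W}\tilde{U}^{n+1}_i, v\rangle_{V_h^{\prime}V_h}(\tilde{Y}^{n+1}_i - Y^n_i)$.

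Finally I would recognise the right-hand side: by linearity and determinism of $\mathcal{W}$ again, $\langle \mathcal{W}\tilde{U}^{n+1}(\tilde{Y}^{n+1} - Y^n)^{\top}, v\rangle_{V_h^{\prime}V_h}$ is precisely $\mathcal{P}^{\perp}_{\mathcal{Y}}g$, whence $\lrr{\mathcal{P}^{\perp}_{\mathcal{Y}}g, z}$ coincides with the claimed expression. The computation is routine; the only points requiring minor care are correctly handling the mean mode $Y_0 = \mathbbm{1}$ (it must be checked that it does not contribute) and the self-adjointness step, neither of which I expect to pose a genuine obstacle.
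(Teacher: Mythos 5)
Your proof is correct and follows essentially the same route as the paper: decompose $\tilde{Y}^{n+1} = Y^n + \widetilde{\delta Y}^{n+1}$, use that the deterministic operator $\mathcal{W}$ commutes with the stochastic projection, and exploit that $\mathcal{P}^{\perp}_{\mathcal{Y}}$ kills the mean mode and the $Y^n$-components while fixing $\widetilde{\delta Y}^{n+1} \in (\mathcal{Y}^n)^{\perp}$. The only difference is presentational — you carry out the computation in coordinates and make explicit the self-adjointness of $\mathcal{P}^{\perp}_{\mathcal{Y}}$, which the paper's one-line proof leaves implicit.
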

\begin{proof}
This is a simple consequence of the fact that, as 
	\[
	\uhr[n+1] = U^{n+1}_0 + \tilde{U}^{n+1} (\tilde{Y}^{n+1})^{\top} =  U^{n+1}_0 + \tilde{U}^{n+1} (\tilde{Y}^{n+1} - Y^n)^{\top} + \tilde{U}^{n+1} (Y^n)^{\top} ,
	\]
	consequently
	\[ 
		\mathcal{P}_{\mathcal{Y}}^{\perp} \mathcal{W} \uhr[n+1] = \mathcal{W} \mathcal{P}_{\mathcal{Y}}^{\perp} \uhr[n+1] = \mathcal{W} \tilde{U}^{n+1} (\tilde{Y}^{n+1} - Y^n)^{\top}.
	\]
\end{proof}

\begin{lemma}
	Assume that $\mathcal{W} : V_h \rightarrow V_h^{\prime}$ is a deterministic operator. Then,
	\begin{equation}
		 \langle \mathbb{E}[{\mathcal{W} \uhr[n+1], Y^n_j}] v_h  \rangle_{V^{\prime}_h V_h} = 
		\lrX{ \mathcal{W} \tilde{U}^{n+1}_j, v_h }
	\end{equation}
\end{lemma}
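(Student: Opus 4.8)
The plan is to reduce the whole statement to the single algebraic identity $\mathbb{E}_{\hmu}[\uhr[n+1] Y^n_j] = \tilde{U}^{n+1}_j$; once this is established, the claim follows at once because $\mathcal{W}$ acts only on the physical variable and therefore commutes with the empirical expectation.

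First I would substitute the representation of the updated solution produced by Algorithm~\ref{alg:evaspl}. Because the reorthonormalisation in step~\ref{itm:reorth-step} leaves the product $\sum_{i=1}^R \tilde{U}^{n+1}_i \tilde{Y}^{n+1}_i$ unchanged, one may write $\uhr[n+1] = \tilde{U}^{n+1}_0 Y_0 + \sum_{i=1}^R \tilde{U}^{n+1}_i \tilde{Y}^{n+1}_i$. Since $\mathcal{W} : V_h \to V_h^{\prime}$ is deterministic, pulling it through the expectation gives $\mathbb{E}_{\hmu}[\mathcal{W}\uhr[n+1] Y^n_j] = \mathcal{W}\,\mathbb{E}_{\hmu}[\uhr[n+1] Y^n_j]$, so it remains only to evaluate the inner expectation.

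Next I would compute $\mathbb{E}_{\hmu}[\uhr[n+1] Y^n_j]$ mode by mode. The mean term contributes $\tilde{U}^{n+1}_0 \mathbb{E}_{\hmu}[Y_0 Y^n_j]$, which vanishes for $1 \leq j \leq R$ since $Y_0 = \mathbbm{1}$ and $Y^n_j$ is zero-mean (and for $j=0$ it supplies exactly $\tilde{U}^{n+1}_0$). Each remaining term is $\tilde{U}^{n+1}_i \mathbb{E}_{\hmu}[\tilde{Y}^{n+1}_i Y^n_j]$, and here I invoke the second orthogonality relation of Lemma~\ref{lem:discdualdo}, namely $\mathbb{E}_{\hmu}[\tilde{Y}^{n+1}_i Y^n_j] = \delta_{ij}$. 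The sum then collapses to $\tilde{U}^{n+1}_j$, yielding the identity, and pairing the resulting element $\mathcal{W}\tilde{U}^{n+1}_j$ against $v_h$ closes the argument.

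I do not expect a genuine obstacle: the lemma is in essence a bookkeeping statement asserting that testing the updated field against the old stochastic mode $Y^n_j$ isolates the $j$-th intermediate deterministic mode. The only point that needs care is consistency of the factorisation: the orthonormality $\mathbb{E}_{\hmu}[\tilde{Y}^{n+1}_i Y^n_j] = \delta_{ij}$ from Lemma~\ref{lem:discdualdo} refers to the intermediate modes $\tilde{Y}^{n+1}$ \emph{before} reorthonormalisation, so one must express $\uhr[n+1]$ through the matching pre-reorthonormalisation factors $\tilde{U}^{n+1}, \tilde{Y}^{n+1}$ throughout rather than through the final $U^{n+1}, Y^{n+1}$.
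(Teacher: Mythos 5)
Your proposal is correct and follows essentially the same route as the paper's own proof: expand $\uhr[n+1]$ in the intermediate factors $\tilde{U}^{n+1}_i \tilde{Y}^{n+1}_i$, commute the deterministic operator $\mathcal{W}$ with the empirical expectation, and collapse the sum via the orthogonality $\mathbb{E}_{\hmu}[\tilde{Y}^{n+1}_i Y^n_j] = \delta_{ij}$ from Lemma~\ref{lem:discdualdo}. Your explicit treatment of the mean mode ($i=0$, using that $Y^n_j$ is zero-mean) and your closing caveat about working with the pre-reorthonormalisation factors are points the paper leaves implicit, but the argument is the same.
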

\begin{proof}
\begin{equation*}
	\langle \mathbb{E}[{\mathcal{W} \uhr[n+1] Y^n_j}], v_h \rangle_{V^{\prime}_h V_h} 
	=
	\sum_{i=0}^R \lrX{ \mathcal{W} \tilde{U}^{n+1}_i \mathbb{E}[{\tilde{Y}^{n+1}_i Y^n_j}] v_h} 
	\\
	=
	\lrX{ \mathcal{W} \tilde{U}^{n+1}_j, v_h },
\end{equation*}
having used Lemma~\ref{lem:discdualdo}.
\end{proof}
The update equations then read as follows: 

\textit{Given $\uhr[n] = U_0^n Y_0^n + \sum_{i=1}^R U^n_i Y^n_i$, compute} 

\textit{1. Deterministic modes: for $0 \leq j \leq R$, find $\tilde{U}^{n+1}_j \in V_h$ s.t.}
\begin{multline} \label{eqn:detmodes-adr-si}
	\lrX{\frac{\tilde{U}^{n+1}_j - U^n_j}{\triangle t},v_h} 	
	+ \lrX{ \bar{\varepsilon} \nabla \tilde{U}^{n+1}_j, \nabla v_h} 	
	+\lrX{\bu \cdot \nabla \tilde{U}^{n+1}_j, v_h} 
	+ \lrX{\bar{c} \tilde{U}^{n+1}_j, v_h} 
	\\
	+ \sum_{K \in \Th} \frac{\delta_K}{\dt}\langle \tilde{U}^{n+1}_j - U^n_j, \bu \cdot \nabla v_h\rangle_K 
	+ \delta_K \langle 
	-\bar{\varepsilon} \Delta \tilde{U}^{n+1}_j 
	+ \bu \cdot \nabla \tilde{U}^{n+1}_j  
	+ \bar{c} \tilde{U}^{n+1}_j ,
	\bu \cdot \nabla v_h \rangle_K  
	\\ 
	+ \lrX{ \mathbb{E}[\varepsilon^{\star} \nabla \uhr[n] Y^n_j], \nabla v_h} 
	+ \lrX{ \mathbb{E}[c^{\star} \uhr[n]Y^n_j ], v_h}  
	+ \sum_{K \in \Th} \delta_K \langle 
	\mathbb{E}[(- \varepsilon^{\star} \Delta \uhr[n] 
	+ c^{\star} \uhr[n] )Y^n_j ],
	\bu \cdot \nabla v_h \rangle_K  
	\\
	=
	\lrX{\mathbb{E}[f Y^n_j], v_h} 
	+ \sum_{K \in \Th} \langle \mathbb{E}[f Y^n_j], \bu \cdot \nabla v_h \rangle_K,
	\quad \forall v_h \in V_h   
\end{multline}

\textit{2. Stochastic modes: for $1 \leq j \leq R$, find $\tilde{Y}^{n+1}_j \in \mathbb{R}^{N_C}$ s.t.}
\begin{multline} \label{eqn:stochmodes-adr-si}
	\sum_{i=1}^R	(\tilde{Y}_i^{n+1} - Y^n_i) 
	\hat{W}_{ij}
+ \mathcal{P}_{\mathcal{Y}}^{\perp} 
	[ 
	\lrX{\varepsilon^{\star} \nabla \uhr[n], \nabla \tilde{U}^{n+1}_j}
	+ \lrX{c^{\star} \uhr[n+1], \tilde{U}^{n+1}_j} ]
	\\
	+ \mathcal{P}_{\mathcal{Y}}^{\perp} \left[
	\sum_{K \in \Th} \delta_K \langle - \varepsilon^{\star} \Delta \uhr[n] + c^{\star} \uhr[n], \bu \cdot \nabla \tilde{U}^{n+1}_j\rangle_K \right]
	= \mathcal{P}_{\mathcal{Y}}^{\perp} \left[
	\lrX{f, \tilde{U}^{n+1}_j}
	+ \sum_{K \in \Th} \delta_K \langle f, \tilde{U}^{n+1}_j \rangle_{K}
	\right]
\end{multline}
where 
\begin{multline*}
	\hat{W}_{ij} = \frac{\tilde{W}_{ij}}{\dt} 
	+ \lrX{\bar{\varepsilon} \nabla \tilde{U}^{n+1}_i, \nabla \tilde{U}^{n+1}_j} 
	+ \lrX{ \bu \cdot \nabla \tilde{U}^{n+1}_i, \tilde{U}^{n+1}_j} 
	\\
	+ \sum_{K \in \Th} \delta_K 
	\langle 
	- \bar{\varepsilon} \Delta \tilde{U}^{n+1}_i
	+ \bu \cdot \nabla \tilde{U}^{n+1}_i  
	+ \bar{c} \tilde{U}^{n+1}_i
	, \bu \cdot \nabla \tilde{U}^{n+1}_j \rangle_K 
	),
\end{multline*}
and again $\tilde{W}^{n+1}_{ij} = \lrX{\tilde{U}^{n+1}_i, \tilde{U}^{n+1}_j} + \sum_{K \in \Th} \delta_K \langle \tilde{U}^{n+1}_i, \bu \cdot \nabla \tilde{U}^{n+1}_j \rangle_K$ for $1 \leq i,j \leq R$.

\textit{3. Orthonormalise basis as in step $3$ of Implicit Euler.}
\vspace{1em}


The semi-implicit scheme gives rise to the following variational formulation 
\begin{multline} \label{eqn:varf-adr-disc-si}
	\frac{1}{\dt}(\uhr[n+1] - \uhr[n], v_{\hhmu}) + (\bar{\varepsilon} \nabla \uhr[n+1], \nabla v_{\hhmu}) 
	+ (\varepsilon^{\star} \nabla \uhr[n], \nabla v_{\hhmu})  
	+  (\bu \cdot \nabla \uhr[n+1], v_{\hhmu}) 
	 \\ 
	 + (\bar{c} \uhr[n+1], v_{\hhmu}) + (c^{\star} \uhr[n], v_{\hhmu}) +  \sum_{K \in \Th} \frac{\delta_K}{\dt} (\uhr[n+1] - \uhr[n], v_{\hhmu})_K 
	 \\ + \sum_{K \in \Th}  \delta_K (- \bar{\varepsilon} \Delta \uhr[n+1] - \varepsilon^{\star} \Delta u^n + \bu \cdot \nabla \uhr[n+1] + \bar{c} \uhr[n+1] + c^{\star}u^n, \bu \cdot \nabla v_{\hhmu})_K 
	\\
	= (f, v_{\hhmu}) + \sum_{K \in \Th} \delta_K (f, \bu \cdot \nabla  v_{\hhmu})_K, 
	\quad
	\forall v_{\hhmu} \in \mathcal{T}_{\!\!\tilde{U}^{n+1} (Y^n)^{\top}}\mathcal{M}_R 
\end{multline}

Step~$2$ of the algorithm involves inverting $\hat{W}$, which is possible for $\dt$ small enough under the assumption that $\tilde{W}$ is invertible. 

\subsection{Norm-stability of SUPG-stabilised DLR} \label{subsec:normstab}

This section establishes different norm-stability properties of the SUPG-stabilised DLR solution. 
%
 \label{subsec:ie-int}

\subsubsection{Implicit Euler discretisation}

\begin{theorem} \label{th:im-stab}
	Assume~\eqref{eqn:varepscoer},~\eqref{eqn:deltaK-coerc} and
	\begin{equation} \label{eqn:dK-dt-bound}
		\delta_K \leq \frac{\dt}{4}.
	\end{equation}
	If (i) $\mu_0 > 0$, it holds
	\begin{equation} \label{eqn:pure-bound}
		\| \uhr[N] \|^2_{} + \dt C_1 \sum_{n=0}^{N-1} \|\uhr[n+1]\|^2_{\mathrm{SUPG}} \leq \|\uhr[0]\|^2_{} + \dt
		C_2
		\sum_{n=0}^{N-1} \|f\|^2_{},
	\end{equation}
	with $C_1 = \nicefrac{1}{2}$ and $C_2 = \left(\frac{2}{\mu_0} + 4\delta\right)$. \\
	If \textit{(ii)} $\mu_0 = 0$ and $f = 0$, then~\eqref{eqn:pure-bound} holds true with with $C_1 = \nicefrac{3}{4}$ and $C_2 = 0$. \\
	If (iii) $\mu_0 = 0$ and $f \neq 0$, then for $\dt < (1 + 2\nu)^{-1}$, 
	it holds 	
	\begin{equation} \label{eqn:gw-bound}
		\| \uhr[N] \|^2_{} + \frac{\dt}{2} \sum_{n=0}^{N-1} \|\uhr[n+1]\|^2_{\mathrm{SUPG}} \leq C_3 \left( \|\uhr[0]\|^2_{} + \dt	
		\sum_{n=0}^{N-1} \|f\|^2_{} \right) ,
	\end{equation}
	with $C_3 = e^{(1 + 2 \nu) T}$.
\end{theorem}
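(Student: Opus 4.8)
The plan is to obtain a one-step energy inequality by testing the discrete variational formulation~\eqref{eqn:varf-adr-disc-ie} with the current iterate itself, $v_{\hhmu} = \uhr[n+1]$, and then to sum over $n$. The decisive enabling fact is that this test function is \emph{admissible}: by the membership lemma following Theorem~\ref{th:disc-varf}, both $\uhr[n]$ and $\uhr[n+1]$ lie in the tangent test space $\mathcal{T}_{\tilde{U}^{n+1}(Y^n)^{\top}}\mathcal{M}_R$ on which~\eqref{eqn:varf-adr-disc-ie} is valid. This is precisely what makes the whole estimate robust to the conditioning of $\tilde{W}^{n+1}$, since one never needs to invert that matrix in order to produce the test function.

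After testing and multiplying through by $2\dt$, I would treat the left-hand side with three ingredients. First, the symmetric mass term is handled by the polarization identity $2(\uhr[n+1]-\uhr[n], \uhr[n+1]) = \|\uhr[n+1]\|^2 - \|\uhr[n]\|^2 + \|\uhr[n+1]-\uhr[n]\|^2$. Second, the stabilised form is bounded below by the weak coercivity of Lemma~\ref{lem:coerasupg}, giving $2\dt\,\aSUPG(\uhr[n+1],\uhr[n+1]) \ge \dt\|\uhr[n+1]\|^2_{\mathrm{SUPG}} - 2\dt\nu\|\uhr[n+1]\|^2$. Third — and this is the genuinely non-trivial term — the non-symmetric streamline contribution $2\sum_{K\in\Th}\delta_K(\uhr[n+1]-\uhr[n],\bu\cdot\nabla\uhr[n+1])_{K,\Lthm}$ is estimated cellwise by Young's inequality. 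The constraint~\eqref{eqn:dK-dt-bound}, $\delta_K \le \dt/4$, is exactly what lets the $L^2$-difference part be absorbed by the polarization remainder $\|\uhr[n+1]-\uhr[n]\|^2$, while the streamline part is bounded by a fraction of $\sum_K \delta_K\|\bu\cdot\nabla\uhr[n+1]\|^2_{K,\Lthm}$, which is itself dominated by $\|\uhr[n+1]\|^2_{\mathrm{SUPG}}$; one is thus left with a clean residual $\tfrac34\dt\|\uhr[n+1]\|^2_{\mathrm{SUPG}} - 2\dt\nu\|\uhr[n+1]\|^2$ before touching the forcing.

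The case split then comes from how the forcing $2\dt\mathcal{F}(\Hcal\uhr[n+1])$ and the term $-2\dt\nu\|\uhr[n+1]\|^2$ are absorbed. In case (i) one observes that $\mu_0>0$ forces $\nu=0$ (from $\mu_0=\tilde\mu_0+\nu$ and $\nu=-\min\{\tilde\mu_0,0\}$), so coercivity is genuine; the bulk forcing $(f,\uhr[n+1])$ is absorbed via $\mu_0\|\uhr[n+1]\|^2\le\|\mu^{\nicefrac12}\uhr[n+1]\|^2$, producing the $\nicefrac{2}{\mu_0}$ in $C_2$, and the streamline forcing by a further Young step producing the $4\delta$, at the cost of lowering the retained SUPG coefficient to $C_1=\nicefrac12$; telescoping the resulting recursion yields~\eqref{eqn:pure-bound}. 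Case (ii) is the same computation with $f=0$, where no SUPG budget is spent on the forcing, leaving $C_1=\nicefrac34$ and a monotone decay. In case (iii), $\mu_0=0$ so $(f,\uhr[n+1])$ must be absorbed by a plain Young inequality, contributing a term $+\tfrac{\dt}2\|\uhr[n+1]\|^2$ which, combined with the $2\dt\nu\|\uhr[n+1]\|^2$ coming from weak coercivity, leaves the factor $\bigl(1-\dt(1+2\nu)\bigr)\|\uhr[n+1]\|^2$ on the left; this is positive exactly under the stated restriction $\dt<(1+2\nu)^{-1}$, and a discrete Grönwall argument (using $(1-\dt(1+2\nu))^{-1}$ summed over $N=T/\dt$ steps) produces the exponential constant $C_3=e^{(1+2\nu)T}$ in~\eqref{eqn:gw-bound}.

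The main obstacle is the bookkeeping around the non-symmetric streamline cross term: it is not a perfect time difference, and one must carefully balance the Young parameters so that its two pieces are split between the polarization remainder and the available streamline part of the SUPG norm without double-counting, which is what pins down the sharp thresholds $\delta_K\le\dt/4$ and the exact constants $\nicefrac12$, $\nicefrac34$. The remaining delicate point is purely structural — the admissibility of $\uhr[n+1]$ as a test function — but this is already furnished by Lemma~\ref{lem:discdualdo} and the membership lemma, so the analysis reduces to an essentially classical SUPG energy estimate carried out within the tangent space.
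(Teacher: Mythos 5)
Your proposal is correct and follows essentially the same route as the paper's proof: testing~\eqref{eqn:varf-adr-disc-ie} with $\uhr[n+1]$ (admissible via the tangent-space membership lemma), the polarization identity, the weak coercivity of Lemma~\ref{lem:coerasupg}, a cellwise Young estimate of the streamline cross term absorbed using $\delta_K \le \dt/4$, the same case-dependent absorption of the forcing (yielding $C_1=\nicefrac12$, $C_2=\nicefrac{2}{\mu_0}+4\delta$, $C_1=\nicefrac34$), and a discrete Gronwall argument under $\dt<(1+2\nu)^{-1}$ for case (iii). The only blemish is a trivial bookkeeping slip in case (iii) (the plain Young step contributes $\dt\|\uhr[n+1]\|^2$ after multiplying by $2\dt$, not $\tfrac{\dt}{2}\|\uhr[n+1]\|^2$), which does not affect the stated factor $(1-\dt(1+2\nu))$ or the conclusion.
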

\begin{proof}
	We first detail a few bounds, and then expand on how those can be used to obtain the different cases. The main ``challenge'' is to suitably bound the term $\delta_K \dt^{-1}(\uhr[n+1] - \uhr[n], \bu \cdot \nabla \uhr[n+1])_{K,\Lthm}$, for which we use the same approach as in~\cite{jovo11}. This is where condition~\eqref{eqn:dK-dt-bound} is needed.

	Testing~\eqref{eqn:varf-adr-disc-ie} against $\uhr[n+1] \in \mathcal{T}_{\tilde{U}^{n+1} {Y^n}^{\top}} \mathcal{M}_{\!R}$ and using the fact that $2(u-v,u) = \|u\|^2 - \|v\|^2 + \|u - v\|^2$, the first term becomes 
	
	\begin{equation} \label{eqn:un-diff-term}
		\frac{1}{2\dt}(\|\uhr[n+1] \|_{}^2 -  \| \uhr[n] \|_{}^2 +  \| \uhr[n+1]- \uhr[n] \|_{}^2). 
	\end{equation}
\noindent The second term verifies $\aSUPG(\uhr[n+1], \uhr[n+1]) \geq \nicefrac{1}{2} \|\uhr[n+1]\|^2_{\mathrm{SUPG}} - \nu \|\uhr[n+1]\|^2_{}$ by~\eqref{eqn:asupg-weak-coer}. 

\noindent For the second-to-last term, using Cauchy-Schwarz and the $\epsilon$-Young inequality gives
	\begin{align} \label{eqn:diff-un-bgrad}
	 & \left| 
	 \sum_{K \in \mathcal{T}_{\!\!h}} \frac{\delta_K}{\dt} (\uhr[n+1] - \uhr[n], \bu \cdot \nabla \uhr[n+1])_{K,\Lthm}
	\right|
	\\ 
	 & \qquad \leq 
	\sum_{K \in \mathcal{T}_{\!\!h}} \frac{\delta_K}{\dt} \left(\frac{2\| \uhr[n+1] - \uhr[n] \|^2_{K,\Lthm}}{\dt} + \frac{ \dt \| \bu \cdot \nabla \uhr[n+1]\|_{K,\Lthm}^2}{8} \right) \\
	& \qquad\leq \sum_{K \in \mathcal{T}_{\!\!h}} \frac{1}{2\dt} \| \uhr[n+1] - \uhr[n] \|^2_{K,\Lthm} +  \frac{\delta_K}{8} \| \bu \cdot \nabla \uhr[n+1]\|_{K,\Lthm}^2
	\\
	 & \qquad \leq \frac{1}{2\dt} \| \uhr[n+1] - \uhr[n] \|^2_{} + \frac{1}{8}\sum_{K \in \mathcal{T}_{\!\!h}}  \delta_K \| \bu \cdot \nabla \uhr[n+1]\|_{K,\Lthm}^2, 
	\end{align}
	where the third inequality is obtained using~\eqref{eqn:dK-dt-bound}.
	Finally, the terms in the right hand-side can be bounded in two ways. Assuming $\mu_0 > 0$,
	\begin{equation} \label{eqn:f-bound-muzero}
		|(f, \uhr[n+1])| \leq \frac{1}{\mu_0} \|f\|^2_{} + \frac{1}{4} \| \mu^{\nicefrac{1}{2}} \uhr[n+1] \|^2_{}.
	\end{equation}
	If $\mu_0 = 0$ however, we bound
	\begin{equation} \label{eqn:f-bound-nomu}
		|(f, \uhr[n+1])| \leq  \frac{1}{2}\|f\|^2_{} + \frac{1}{2} \| \uhr[n+1] \|^2_{}.
	\end{equation}
	Furthermore, 
	\begin{multline} \label{eqn:fbgrad-bound}
		|\sum_{K \in \Th} \delta_K (f, \bu \cdot \nabla \uhr[n+1])| 
		\leq \sum_{K \in \Th} 2 \delta_K \|f\|^{2}_{K,\Lthm} + \frac{\delta_K}{8} \|\bu \cdot \nabla \uhr[n+1]\|^2_{K,\Lthm} 
		\\
		\leq 2 \delta \|f\|^{2}_{} +  \frac{1}{8}\sum_{K \in \Th}\delta_K \|\bu \cdot \nabla \uhr[n+1]\|^2_{K,\Lthm},
	\end{multline}
	where $\delta \coloneqq \max_{K \in \Th} \delta_K$.
	Firstly, for the case $\mu_0 > 0$, we lower-bound the l.h.s of~\eqref{eqn:varf-adr-disc-ie} using~\eqref{eqn:un-diff-term} and the coercivity of $\aSUPG$ (note that $\nu = 0)$, and upper-bound the r.h.s with~\eqref{eqn:diff-un-bgrad},~\eqref{eqn:f-bound-muzero} and~\eqref{eqn:fbgrad-bound} to obtain
	\begin{multline}
		\frac{1}{2\dt}(\|\uhr[n+1] \|_{}^2 -  \| \uhr[n] \|_{}^2 +  \| \uhr[n+1]- \uhr[n] \|_{}^2) + \frac{1}{2} \|\uhr[n+1]\|^2_{\mathrm{SUPG}} 
		\\ 
		\leq \frac{1}{2\dt} \| \uhr[n+1] - \uhr[n] \|^2_{} + \frac{1}{4}\sum_{K \in \mathcal{T}_{\!\!h}}  \delta_K \| \bu \cdot \nabla \uhr[n+1]\|_{K,\Lthm}^2 + 
		\left(\frac{1}{\mu_0}+  2 \delta \right)\|f\|^2_{} 
		\\
		+ \frac{1}{4} \| \mu^{\nicefrac{1}{2}} \uhr[n+1] \|^2_{}.
	\end{multline}
	Claim \textit{(i)} follows by using 
	\begin{equation} \label{eqn:coer-bound1}
		\frac{1}{2} \|\uhr[n+1]\|^2_{\mathrm{SUPG}} - \frac{1}{4} \sum_{K \in \Th} \delta_K \| \bu \cdot \nabla \uhr[n+1]\|^2_{K,\Lthm} - \frac{1}{4} \|\mu^{\nicefrac{1}{2}} \uhr[n+1]\|^2_{} \geq \frac{1}{4}\|\uhr[n+1]\|^2_{\mathrm{SUPG}},
	\end{equation}
	rearranging a few terms, multiplying by $2\dt$ and summing over $n = 0, \ldots, N-1$. 

	Note that the assumption $\mu_0 >0$ is only used in~\eqref{eqn:f-bound-muzero} -- hence, if $\mu_0 = 0$ and $f = 0$, the other terms can be bounded in exactly the same way. We thus only need to control 
	\begin{equation} \label{eqn:coer-bound2}
		\frac{1}{2} \|\uhr[n+1]\|^2_{\mathrm{SUPG}} - \frac{1}{8} \sum_{K \in \Th} \delta_K \| \bu \cdot \nabla \uhr[n+1]\|^2_{K,\Lthm} \geq \frac{3}{8} \|\uhr[n+1]\|^2_{\mathrm{SUPG}},
	\end{equation}
	which subsequently yields $C_1 = \nicefrac{3}{4}$. This is claim \textit{(ii)}. 

	The more general approach to treat the case $\mu_0 = 0$ and $f \neq 0$ is to use~\eqref{eqn:f-bound-nomu} and the weak coercivity. Performing the same steps as before yields
	\begin{multline}
		\frac{1}{2\dt}(\|\uhr[n+1] \|_{}^2 -  \| \uhr[n] \|_{}^2 +  \| \uhr[n+1]- \uhr[n] \|_{}^2) + \frac{1}{2} \|\uhr[n+1]\|^2_{\mathrm{SUPG}} 
		\\ 
		\leq \frac{1}{2\dt} \| \uhr[n+1] - \uhr[n] \|^2_{} + \frac{1}{4}\sum_{K \in \mathcal{T}_{\!\!h}}  \delta_K \| \bu \cdot \nabla \uhr[n+1]\|_{K,\Lthm}^2 + \left(\frac{1}{2}  + 2 \delta\right) \|f\|^2_{} 
		\\+ \left( \frac{1}{2} + \nu  \right) \| \uhr[n+1] \|^2_{},
 \end{multline}
 which yields
	\begin{equation*}
		(1 - \dt(1+2\nu))\|\uhr[n+1] \|_{}^2 -  \| \uhr[n] \|_{}^2 + \frac{\dt}{4} \|\uhr[n+1]\|^2_{\mathrm{SUPG}}
		\leq \dt \left(1  + 4 \delta\right) \|f\|^2_{}.
 \end{equation*}
 The condition $\dt < (1+2\nu)^{-1}$ ensures the leftmost term is nonnegative, and using a standard Gronwall-lemma (e.g.~\cite{quva08}) yields~\eqref{eqn:gw-bound}. The constants are determined by the growth factor $(1 + 2 \nu)$. 
\end{proof}

\subsubsection{Semi-Implicit integrator} \label{subsec:si-int}

The desirable properties of the semi-implicit integrator become apparent under moderate stochasticity; this is expressed through the following assumptions:
\begin{align}
&	|\varepsilon^{\star}(\omega)| \leq \frac{1}{32} \hat{\varepsilon}, && \forall \omega \in \hat{\Omega}  \label{eqn:varepscond}
	\\
&	|c^{\star}(x,\omega)|  \leq \frac{1}{32} \mu(x,\omega), && \forall x \in D, \omega \in \hat{\Omega} \label{eqn:ccond} 
\end{align}
The latter condition relates $c^{\star}$ to $\mu$ rather than some quantity only dependent on $c$ (even if, in the case of zero-divergence and $c$ positive, $\mu = \frac{1}{2}c$). 
Furthermore, while not presented here, it is also possible to ask a more general condition $|c^{\star}(x,\omega)| \leq C_1 c(x,\omega)$, and find Gronwall-type bounds. 

We remark that those conditions are merely sufficient to ensure the stability results we propose; as can be seen in the proof of Theorem~\ref{th:si-stab}, there is some flexibility and terms may be balanced in different ways.

\begin{theorem} \label{th:si-stab}
	Assume~\eqref{eqn:varepscoer} and 
	\begin{equation} \label{eqn:si-dK-bound}
		\delta_K \leq \frac{1}{8}\left\{ \frac{1}{2 \trnorm{c}_K}, \frac{h_K^2}{2 \hat{\varepsilon} C_I^2 \max(C^2_E, 1) d}, 2\dt \right\}.
	\end{equation}
Let $A_0 = \|\uhr[0]\|^2_{} + \frac{1}{8} \hat{\varepsilon} \|\nabla \uhr[0]\|^2_{} +  \frac{1}{8} \|\mu^{\nicefrac{1}{2} }\uhr[0]\|^2_{}$.

	If (i) $\mu_0 > 0$, it holds
	\begin{equation} \label{eqn:si-pure-bound}
		\| \uhr[N] \|^2_{} + \dt C_1 \sum_{n=0}^{N-1} \|\uhr[n+1]\|^2_{\mathrm{SUPG}} \leq A_0 + \dt
		C_2
		\sum_{n=0}^{N-1} \|f\|^2_{},
	\end{equation}
	with $C_1 = \nicefrac{1}{4}$ and $C_2 = \left(\frac{2}{\mu_0} + 4\delta\right)$. \\ 
	If \textit{(ii)} $\mu_0 = 0$ and $f = 0$, then~\eqref{eqn:pure-bound} holds true with with $C_1 = \nicefrac{1}{2}$ and $C_2 = 0$.  \\
	If (iii) $\mu_0 = 0$ and $f \neq 0$, then for $\dt < (1 + 2\nu)^{-1}$,
	\begin{equation} \label{eqn:si-gw-bound}
		\| \uhr[N] \|^2_{} + \frac{\dt}{4} \sum_{n=0}^{N-1} \|\uhr[n+1]\|^2_{\mathrm{SUPG}} \leq C_3 \left( A_0 + \dt	
		\sum_{n=0}^{N-1} \|f\|^2_{} \right) ,
	\end{equation}
	 with $C_3 = e^{(1 + 2 \nu) T}$.
\end{theorem}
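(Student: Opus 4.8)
The plan is to follow the same blueprint as the proof of Theorem~\ref{th:im-stab}: test the discrete variational formulation~\eqref{eqn:varf-adr-disc-si} against $v_{\hhmu} = \uhr[n+1] \in \mathcal{T}_{\tilde{U}^{n+1}(Y^n)^{\top}}\mathcal{M}_R$, and use the identity $2(u-v,u) = \|u\|^2 - \|v\|^2 + \|u-v\|^2$ on the time-difference term to produce $\frac{1}{2\dt}(\|\uhr[n+1]\|^2 - \|\uhr[n]\|^2 + \|\uhr[n+1]-\uhr[n]\|^2)$. The only genuinely new difficulty relative to the implicit case is that the fluctuation coefficients $\varepsilon^{\star},c^{\star}$ are integrated explicitly (at $t_n$) while the means $\bar\varepsilon,\bar c$ are implicit (at $t_{n+1}$); this is exactly where the moderate-stochasticity assumptions~\eqref{eqn:varepscond}--\eqref{eqn:ccond} and the initial term $A_0$ enter.

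The key observation that linearises the spatial analysis is that, since $\varepsilon = \bar\varepsilon + \varepsilon^{\star}$ and $c = \bar c + c^{\star}$, the split satisfies $\mathcal{A}_h^1 + \mathcal{A}_h^2 = \aSUPG$ (with $\bu$ deterministic, there is no advective fluctuation to track). Testing the spatial part of~\eqref{eqn:varf-adr-disc-si} against $\uhr[n+1]$ therefore gives
\[
\mathcal{A}_h^2(\uhr[n+1],\uhr[n+1]) + \mathcal{A}_h^1(\uhr[n],\uhr[n+1])
= \aSUPG(\uhr[n+1],\uhr[n+1]) + \mathcal{A}_h^1(\uhr[n]-\uhr[n+1],\uhr[n+1]).
\]
The first term is bounded below by the weak coercivity of Lemma~\ref{lem:coerasupg}, yielding $\tfrac{1}{2}\|\uhr[n+1]\|^2_{\mathrm{SUPG}} - \nu\|\uhr[n+1]\|^2$ (note that~\eqref{eqn:si-dK-bound} implies~\eqref{eqn:deltaK-coerc}, since $\max(C_E^2,1)\geq C_E^2$ and the extra $\tfrac18$ only tightens the bound). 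The remainder $R_n \coloneqq \mathcal{A}_h^1(\uhr[n]-\uhr[n+1],\uhr[n+1])$ is the object to control, and it is precisely the term that couples $t_n$ and $t_{n+1}$.

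To bound $R_n$ I would proceed term by term with Cauchy--Schwarz, the smallness assumptions, and Young's inequality: the diffusion fluctuation obeys $|(\varepsilon^{\star}\nabla(\uhr[n]-\uhr[n+1]),\nabla\uhr[n+1])| \leq \tfrac{1}{32}\hat\varepsilon\,\|\nabla(\uhr[n]-\uhr[n+1])\|\,\|\nabla\uhr[n+1]\|$ by~\eqref{eqn:varepscond}, the reaction fluctuation is controlled in the $\mu^{\nicefrac{1}{2}}$-weighted norm through $|c^{\star}|\leq\tfrac{1}{32}\mu$ from~\eqref{eqn:ccond}, and the strong-residual pieces $\sum_K\delta_K(-\varepsilon^{\star}\Delta(\uhr[n]-\uhr[n+1])+c^{\star}(\uhr[n]-\uhr[n+1]),\bu\cdot\nabla\uhr[n+1])_K$ are absorbed into the diffusion and streamline parts of $\|\cdot\|_{\mathrm{SUPG}}$ after applying the inverse inequality~\eqref{eqn:inv-equal} and the second constraint on $\delta_K$ in~\eqref{eqn:si-dK-bound}. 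Using $\|\nabla(\uhr[n]-\uhr[n+1])\|^2 \leq 2\|\nabla\uhr[n]\|^2 + 2\|\nabla\uhr[n+1]\|^2$ (and the analogous $\mu^{\nicefrac{1}{2}}$-bound), every piece of $R_n$ is dominated by a small (order $\tfrac{1}{32}$) multiple of the diffusion and reaction parts of $\|\uhr[n+1]\|^2_{\mathrm{SUPG}}$ plus the same small multiple of the corresponding \emph{explicit} quantities at $t_n$. The transient streamline term $\sum_K\tfrac{\delta_K}{\dt}(\uhr[n+1]-\uhr[n],\bu\cdot\nabla\uhr[n+1])_K$ and the forcing are then treated verbatim as in Theorem~\ref{th:im-stab}, using $\delta_K\leq\tfrac{\dt}{4}$ (the third bound in~\eqref{eqn:si-dK-bound}, i.e.~\eqref{eqn:dK-dt-bound}) to cancel the $\tfrac{1}{2\dt}\|\uhr[n+1]-\uhr[n]\|^2$ contribution and the forcing bounds~\eqref{eqn:f-bound-muzero}--\eqref{eqn:fbgrad-bound} for the right-hand side. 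After multiplying by $2\dt$ and summing over $n=0,\dots,N-1$, the difference $\|\uhr[n+1]\|^2 - \|\uhr[n]\|^2$ telescopes to $\|\uhr[N]\|^2-\|\uhr[0]\|^2$; the nonnegative explicit $t_n$-pieces $g_n\geq0$ inherited from $R_n$ satisfy $\sum_{n=0}^{N-1}g_n \leq \sum_{n=0}^{N-1}g_{n+1} + g_0$, so the shifted sum is reabsorbed into the coercive sum on the left while the surviving boundary term $g_0$ is exactly what assembles into $A_0$. Cases (i)--(iii) then close as in Theorem~\ref{th:im-stab}, with a Gronwall argument for (iii) under $\dt<(1+2\nu)^{-1}$.

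The main obstacle is the careful accounting in the bound on $R_n$: one must check that the slack left by the factors $\tfrac{1}{32}$ in~\eqref{eqn:varepscond}--\eqref{eqn:ccond} and by the $\tfrac18$-scaled $\delta_K$ is enough that, after the index-shifting absorption of the explicit pieces, the residual constants match those claimed --- in particular $C_1=\tfrac14$ in case (i) (rather than $\tfrac12$ as in the implicit case, the loss reflecting the explicit treatment) and the $\tfrac18$ weights appearing in $A_0$ --- all while the second-order $\Delta$ residuals remain controllable through the inverse inequality without forfeiting positivity of the streamline contribution.
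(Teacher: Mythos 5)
Your proposal is correct and follows the paper's own proof essentially step for step: your rewriting of the tested spatial terms as $\aSUPG(\uhr[n+1],\uhr[n+1]) + \mathcal{A}_h^1(\uhr[n]-\uhr[n+1],\uhr[n+1])$ is precisely the paper's add-and-subtract of the fluctuation terms at $t_{n+1}$, and your subsequent treatment (Cauchy--Schwarz/Young bounds on the correction using~\eqref{eqn:varepscond}--\eqref{eqn:ccond} and the inverse inequality, absorption of the shifted sums into the coercive term, the $n=0$ boundary terms assembling into $A_0$, and the case analysis concluded as in Theorem~\ref{th:im-stab} with Gronwall for case (iii)) is the same argument the paper gives. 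No genuinely different route or gap to report.
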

\begin{proof}
	See appendix.
\end{proof}

As was the case in~\cite{jovo11}, this estimate induces a dependence of the SUPG parameter $\delta_K$ on $\dt$, which is not the ``expected behaviour'' (it has been experimentally observed in~\cite{jovo11} that the optimal SUPG parameter was independent of $\dt$). We therefore view this result as a sufficient set of conditions for ensuring norm-stability.

\section{Numerical experiments} \label{sec:num-exp}

This section highlights the stabilising properties of the SUPG-DLR framework through numerical experiments.

\subsection{Experiment 1 : rotating body problem}

We solve the classical ``rotating body'' problem, adapted from~\cite{jovo11, josc08} to the stochastic setting. In the determinstic setting, three ``shapes'' are advected on the domain $D = (0,1)^2$ by a counter-clockwise rotation field with unitary angular velocity centered in $(0.5, 0.5)$; furthermore, as the diffusion is small and there is no reaction term, after $t = 2\pi$ seconds, one expects to recover (almost) the initial condition. 

To adapt this example to our stochastic dynamical low-rank setting, we consider the following coefficients for~\eqref{eqn:adv-diff-reac}, 
\begin{align*}
	\varepsilon(\mathbf{y}) = 10^{y_1 - 16} && \bu(\mathbf{x}) = (0.5 - x_2, x_1 - 0.5)^{\top} && c = 0, 
\end{align*}
where $\mathbf{y} \in \Omega \coloneq [-1, 1]^3 $ is the parameter space, with measure $\displaystyle \mu(\mathrm{d}\mathbf{y}) = \frac{1}{8} \bigotimes_{i=1}^3 \lambda( \mathrm{d} y_i)$, where $\lambda$ is the Lebesgue measure.  


The ``shapes'' to be rotated are given by functions $U_0, U_1, U_2$, weighted by stochastic coefficients -- this expansion yields the initial condition. 
Let $r(x_1,x_2) = \sqrt{(x_1 - a)^2 + (x_2 - b)^2}/r_0$, where $r_0 = 0.15$. We then construct
\begin{equation}
	U_0(\mathbf{x}) = 
	\begin{cases}
		1 & \text{ if } r(x,y) \leq 1, \text{ and } |x_1 - a| \geq 0.025, x_2 \geq 0.85
		\\
		0 & \text{ else }
	\end{cases}, 
	\qquad 
	Y_0(\omega) = 1,  
\end{equation}
having used $(a,b) = (0.5, 0.75)$. This corresponds to a slotted cylinder. Next,  
\begin{equation}
	U_1(\mathbf{x}) = \frac{1}{4}(1 + \cos(\pi \min\{r(x_1,x_2), 1\})), \qquad \tilde{Y}_1(\mathbf{y}) = 2 y_2\cos(y_3)
\end{equation}
having set $(a, b) = (0.25, 0.5)$. This corresponds to a hump. Finally,
\begin{equation}
	U_2(\mathbf{x}) = 1 - \min\{ r(x_1,x_2), 1\}, \qquad \tilde{Y}_2(\mathbf{y}) = 30 y_3 (y_2)^3
\end{equation}
represents a cone, centered at $(a,b) = (0.5, 0.25)$. 

If the problem were a pure advection problem (hyperbolic), the true solution would be of rank $3$ (sum of a deterministic and two random zero-mean functions). The present problem is a singular perturbation of pure advection, and, as it displays a similar solution profile as its hyperbolic counterpart, we set $R=2$.

For the physical space, we used a uniform triangular mesh $\mathcal{T}_{h}$ of $128 \times 128$ and considered the $\mathbb{P}_1^C(\mathcal{T}_h)$-finite element space, yielding $N_h = 16641$ degrees of freedom. 
The stochastic space was discretised using $N_C = 7000$ independent and uniformly distributed samples from $\Omega$, yielding $\hat{\Omega} = \{{\omega}^{i}\}_{i=1}^{N_C}$. 

To ensure $U_0$ represents the mean of the random field at the discretised level, we subtract the sample mean of each stochastic mode, stack them in a matrix $[\tilde{Y}^{\star}_1, \tilde{Y}^{\star}_2]$, perform a weighted QR-factorisation to obtain a set of $\Lthm$-orthonormal modes, hereafter $Y_1$ and $Y_2$. The initial condition is then given by $u_0(x, \omega) = U_0(x) Y_0(\omega) + \sum_{i=1}^R U_i(x) Y_i(\omega)$. 
\begin{figure}[h]
\includegraphics[height=5cm]{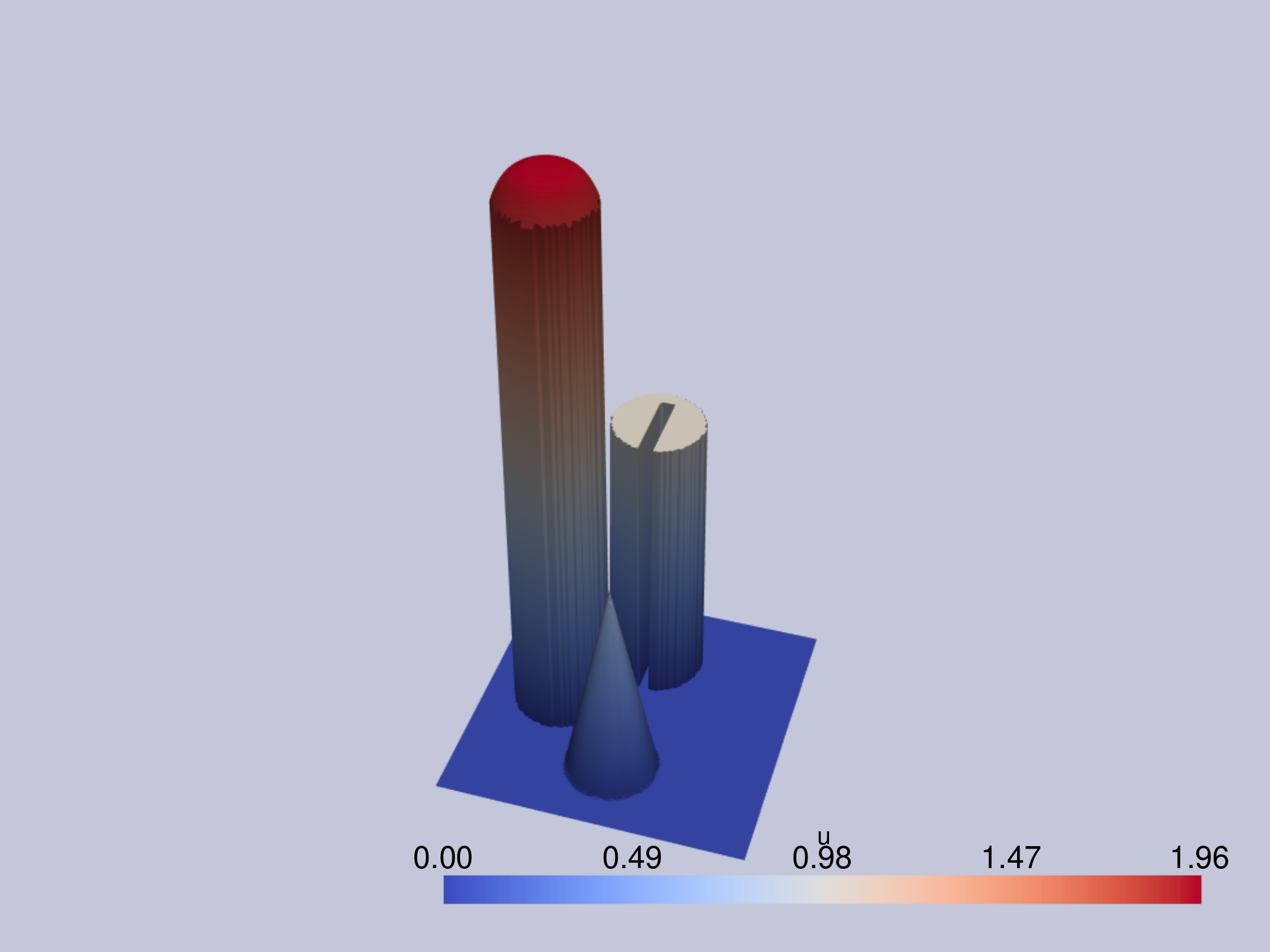}
\centering
\caption{Realisation at $t=0$ with parameters $\mathbf{y} = (0.05, -0.63, 0.67)$.}
\label{fig:initial-cond}
\end{figure}

We chose a time step $\dt = 2\pi / 70000 \approx 10^{-4}$. On the basis of the discussion in~\cite{jovo11}, we set the SUPG parameter to $\delta_K = \nicefrac{h_K}{4}$. We used the semi-implicit time-stepping scheme.  

%
%
%


In Figure~\ref{fig:real-dlr-vs-gal}, we display side-by-side one realisation at time $t = 2\pi$ of the numerical solution obtained via the standard DLR method and via the SUPG-stabilised DLR method. As can be observed, we recover a behaviour consistent with what has been discussed in~\cite{jovo11}: the SUPG method considerably damps the oscillations, although it doesn't cancel them entirely.

\begin{figure}
\centering
\begin{subfigure}{.45\textwidth}
  \centering
  \includegraphics[width=.9\linewidth]{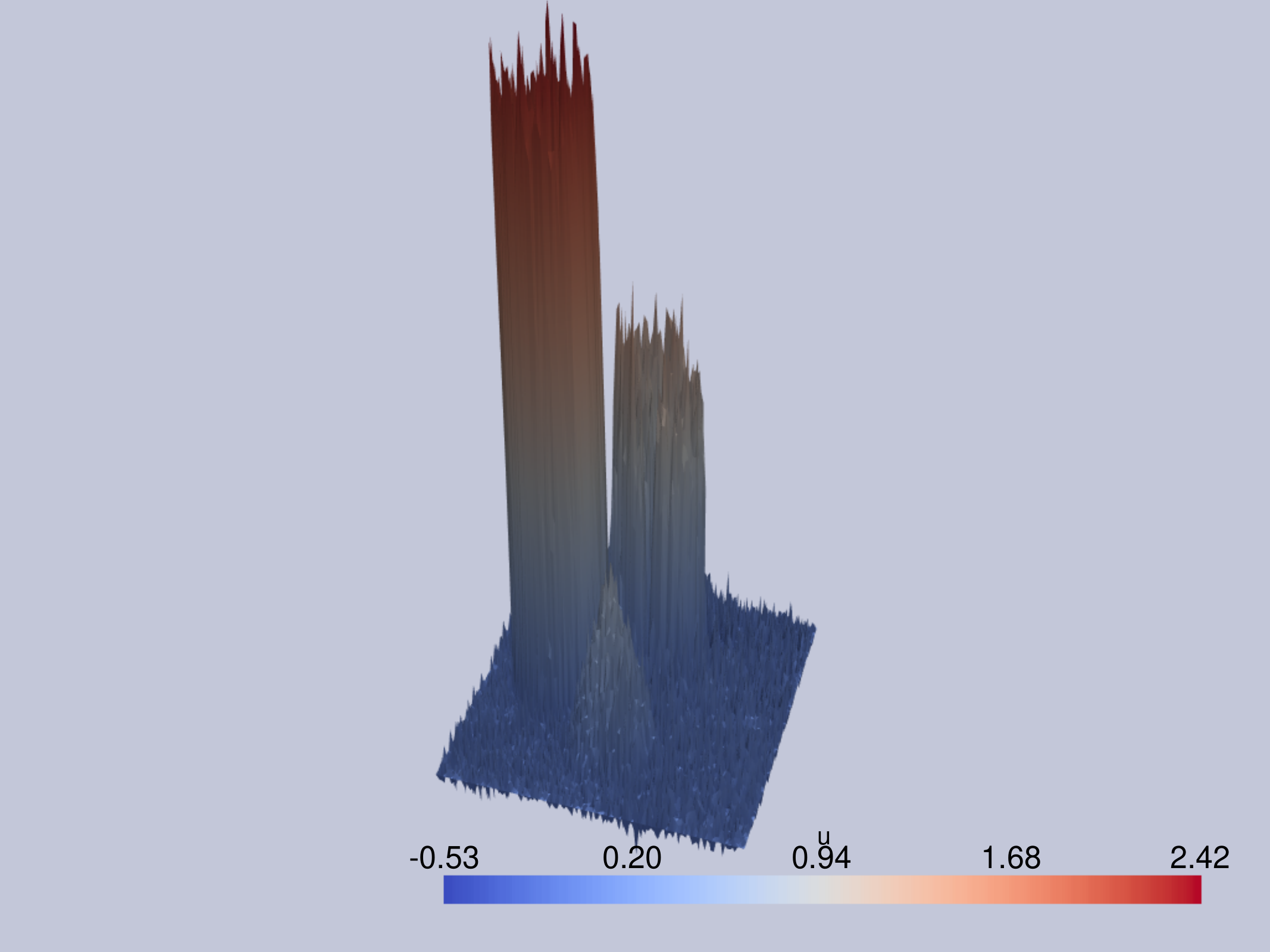}
  \caption{Standard DLR realisation}
  \label{fig:std-dlr-real}
\end{subfigure}%
\begin{subfigure}{.45\textwidth}
  \centering
  \includegraphics[width=.9\linewidth]{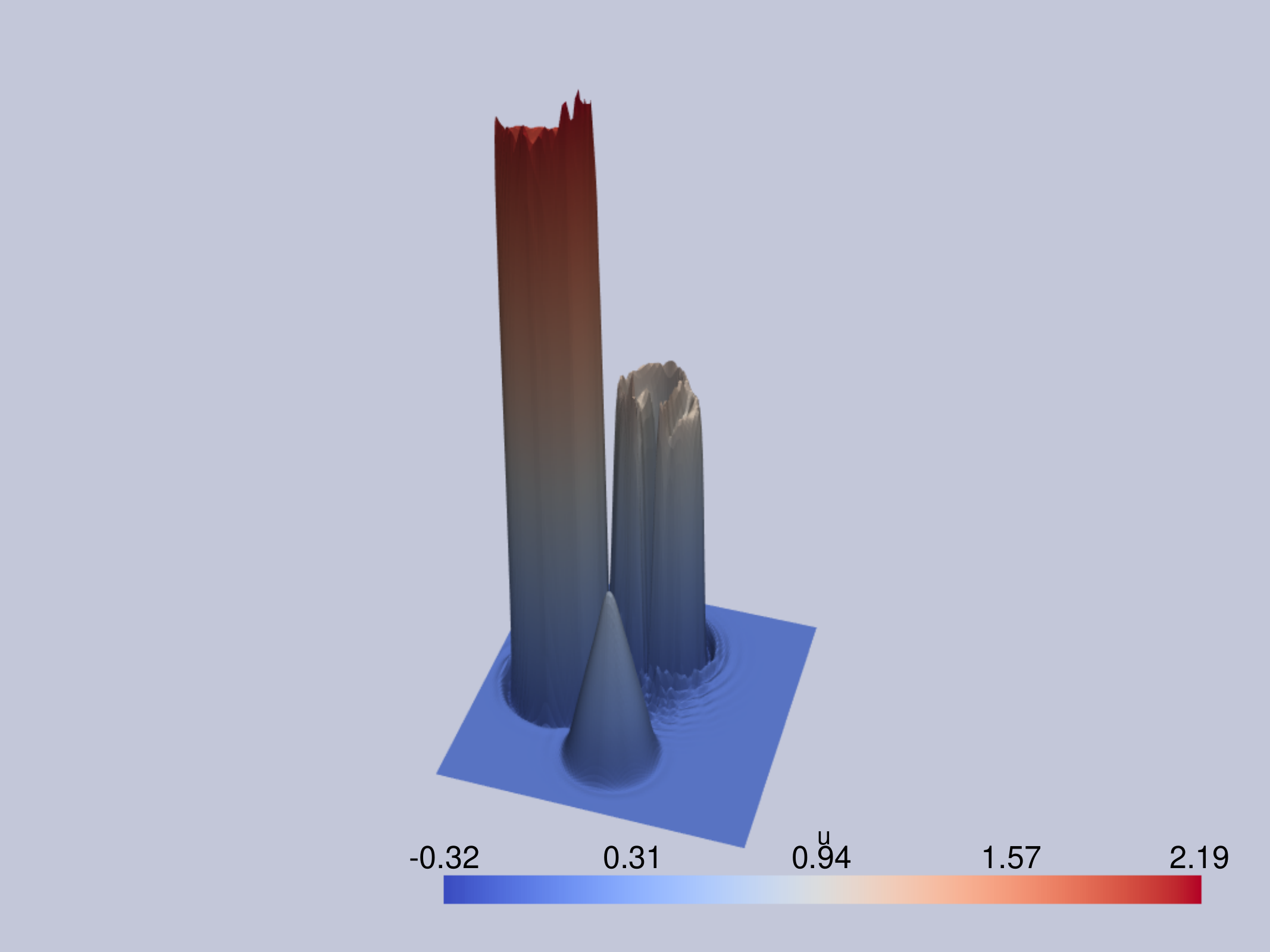}
  \caption{SUPG-DLR realisation}
  \label{fig:supg-dlr-real}
\end{subfigure}
\caption{Numerical solution for same parameters as in Figure~\ref{fig:initial-cond} obtained via standard DLR or SUPG-DLR at $t=2\pi$.}
\label{fig:real-dlr-vs-gal}
\end{figure}

\modt{We also investigate the accuracy of the SUPG-DLR compared to the standard DLR method and the impact of spurious oscillations.
To avoid a biased comparison between the Monte-Carlo solutions computed with SUPG stabilisation and the SUPG-DLR solutions, we compare the numerical solutions to a known proxy function sufficiently close to the true solution. 
Specifically, we choose $u_{\mathrm{proxy}}(t, x, \omega)$ to be the solution to the pure advection problem, i.e. with coefficients
\begin{align*}
	\varepsilon(\mathbf{y}) = 0 && \bu(\mathbf{x}) = (0.5 - x_2, x_1 - 0.5)^{\top} && c = 0, 
\end{align*}
which can be computed exactly by the method of lines. 
We verified that, over the time interval $[0,1]$, Monte-Carlo samples obtained with the SUPG method over a fine grid were significantly closer to the proxy solution (not shown in Figure~\ref{fig:xp1}); this makes it valid to use the proxy to compare the accuracy of $u_{\mathrm{DLR}}^{\mathrm{SUPG}}$ and $u_{\mathrm{DLR}}^{\mathrm{standard}}$. 
Furthermore, restricting ourselves to a small time interval allows to ensure that the dominating errors stem primarily from the physical discretisation through standard Galerkin or SUPG and not from the time discretisation. It is clear in Figure~\ref{fig:xp1} that the error of $u_{\mathrm{DLR}}^{\mathrm{standard}}$ grows quicker than that of $u_{\mathrm{DLR}}^{\mathrm{SUPG}}$, which we attribute to the greater amount of numerical oscillations present in $u_{\mathrm{DLR}}^{\mathrm{standard}}$. 
This is corroborated by Figure~\ref{fig:xp1-2}, in which we quantify the oscillations present in the realisations of both $u_{\mathrm{DLR}}^{\mathrm{SUPG}}$ and $u_{\mathrm{DLR}}^{\mathrm{standard}}$ using the Maximum Difference (MD) metric (called ``variance'' in~\cite{jovo11}), given by 
\[
  \mathrm{MD}(u(x)) = \max_{x \in D} u(x) - \min_{x \in D} u(x).
\] 
Given the quasi-pure transport nature of the problem, the theoretical behaviour is for the MD metric to remain constant for each realisation over time, with a value equal to that at the initial condition (since the initial condition gets principally advected over time). 
As is seen in Figure~\ref{fig:xp1-2}, the MD of $u_{\mathrm{DLR}}^{\mathrm{SUPG}}$ realisations deviate less from the MD of the initial condition, while also displaying a less oscillatory behaviour in comparison to their $u_{\mathrm{DLR}}^{\mathrm{standard}}$ counterparts.
}

\begin{figure}
\centering
\begin{subfigure}{.5\textwidth}
  \centering
	\includegraphics[scale=.45]{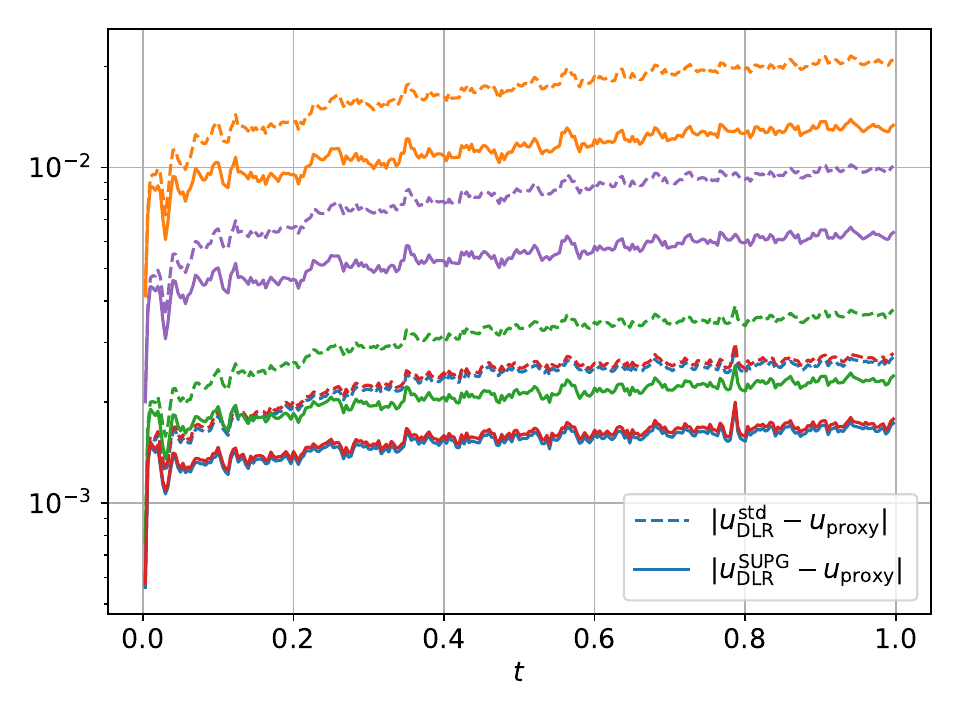}
  \caption{Accuracy w.r.t $u_{\mathrm{proxy}}$}
  \label{fig:xp1}
\end{subfigure}%
\begin{subfigure}{.5\textwidth}
  \centering
	\includegraphics[scale=.45]{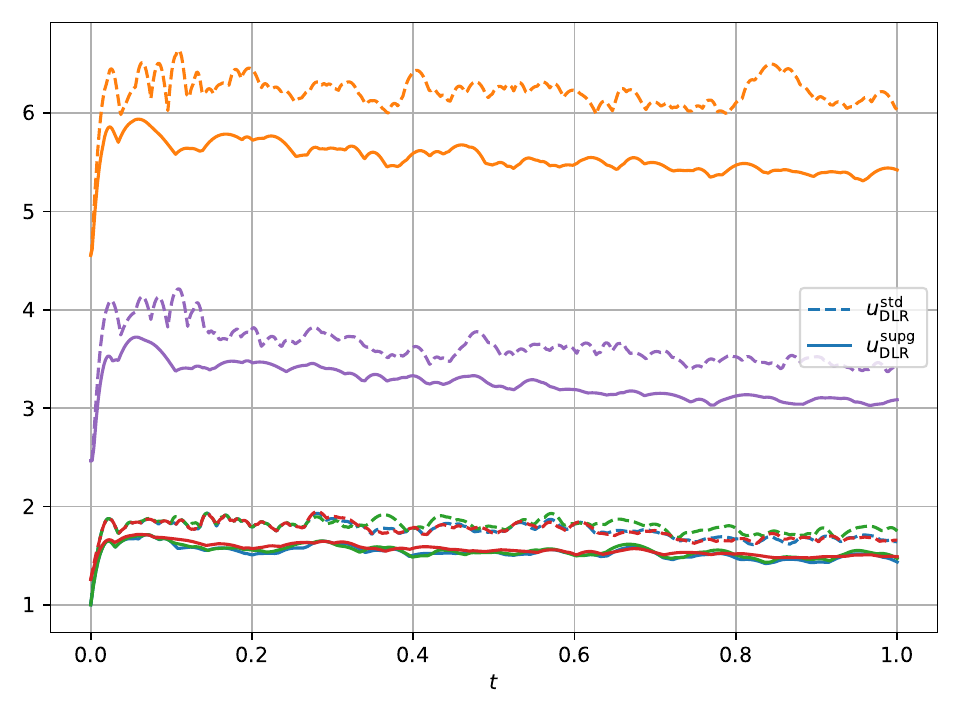}
  \caption{Oscillations (MD metric)}
  \label{fig:xp1-2}
\end{subfigure}
\caption{ (a) Comparison of $u_{\mathrm{DLR}}^{\mathrm{SUPG}}$ and $u_{\mathrm{DLR}}^{\mathrm{standard}}$ to proxy solution, each colour corresponds to a realisation. (b) Oscillations in realisations of $u_{\mathrm{DLR}}^{\mathrm{SUPG}}$ and $u_{\mathrm{DLR}}^{\mathrm{standard}}$. }
\label{fig:test}
\end{figure}


\subsection{Experiment 2 : boundary layers \& mildly stochastic advection}

We consider a variation of the experiment proposed in~\cite{paro14}.
On the physical domain $D = (0,1)^2$ and on the time interval $[0,T]$ with $T=1.2$ (with $\dt = \nicefrac{T}{50}$), we solve the transient random advection-diffusion-reaction~\eqref{eqn:adv-diff-reac}, with coefficients
\begin{align*}
	\varepsilon(\mathbf{y}) = \frac{1}{y_1}, && \bu(\mathbf{x}, \mathbf{y}) = 
	\begin{pmatrix}
		1 \\
		1
	\end{pmatrix}
	+ (y_2 - k(y_2)) 
	\begin{pmatrix}
		x_2 \\
		x_1
	\end{pmatrix}
	, && c = 0, 
\end{align*}
where $\mathbf{y} = (y_1, y_2, y_3, y_4) \in [5000,6000] \times  [-1,1]^3 \eqcolon \Omega$ (the parameter space), and $k(y_2)$ is a constant that will be defined below.
The physical domain is discretised using a uniform triangular mesh $\mathcal{T}_{\!h}$ of $50 \times 50$, on which we consider the $\mathbb{P}_{1}^C(\mathcal{T}_{\!h})$ Finite Element space of size $N_h = 2601$. 
The stochastic space is discretised by collocating each parameter interval with $N =10$ equispaced points, i.e., $\hat{\Omega} = \mathcal{S}_N^1 \times \ldots \times \mathcal{S}^4_N$, where $\mathcal{S}^i_N = \{a_i + j\frac{b_i - a_i}{(N-1)}, j = 0, \ldots, N-1\}$, hence $N_C = 10^4$. The discrete measure is given by $\hat{\mu} = N_C^{-1} \sum_{i=1}^{N_C} \delta_{\mathbf{y}^{(i)}}$. 

The initial condition is given by a suitable truncation of the zero-mean random function
\begin{equation}
	\hat{u}(\mathbf{x}, \mathbf{y})^{\star} = 5 \sin(2 \pi x_1) \sin(2 \pi x_2) 
	(e^{
		\cos(y_3 x_1 + y_4 x_2)
	})^{\star}.
\end{equation}
We perform a generalised SVD~\cite{ab07} to obtain $\mathrm{GSVD}(\hat{u}^{\star}) = \sum_{i=1}^{\min(N_h, N_C)} U_i Y_i$ and truncate at $R = 34$, as this corresponds to a truncation error of order $10^{-5}$, smaller than the error incurred by the Finite Element method. 
Setting $U_0 = 0$ (and recalling that $Y_0$ = 1), the initial condition is given by $u_0(\mathbf{x}, \mathbf{y}) = \sum_{i=0}^R U_i(\mathbf{x}) Y_i (\mathbf{y})$. 

For $\partial D_1$ and $\partial D_2$ as specified in Figure~\ref{fig:domain-boundary}, the boundary conditions are given by $u(t, \mathbf{x}, \mathbf{y}) = 1$ for $\mathbf{x} \in \partial D_1$ and $0$ when $\mathbf{x} \in \partial D_2$.
These boundary conditions are not homogeneous, but remain deterministic.
Following the approach proposed in~\cite{muno18}, the non-homogeneous boundary conditions can be imposed on $U_0$ while the other physical modes $\{U_i\}_{i=1}^R$ are supplemented with homogeneous boundary conditions.

By the imposed boundary conditions, there is a formation of a boundary layer, making the numerical simulation challenging. 
Furthermore, since the advection is stochastic, it is not directly amenable to the generalised PG-DLR framework described in this work. 
As suggested earlier, we use the mean advection field $\mathbb{E}_{\hat{\mu}}[\bu]$ to stabilise the DLR system. 
This is justified by the fact that the realisations follow the same overall direction as the mean field, and the SUPG is expected to alleviate oscillations arising from sharp gradients parallel to the advection field.
To ensure that $\mathbb{E}_{\hat{\mu}}[\bu] = (1,1)^{\top}$, we choose $k(y_2) = \mathbb{E}_{\hat{\mu}}[y_2]$ in the problem, which causes the second part of the advection field to be naturally zero-mean in the chosen stochastic discretisation. 

Figure~\ref{fig:reals-time} shows the evolution of a realisation obtained via the SUPG-DLR method at different times; we do not show the plots obtained via the standard DLR method as, for the same parameters, \modt{the resulting numerical solutions were extremely oscillatory and unstable}. 

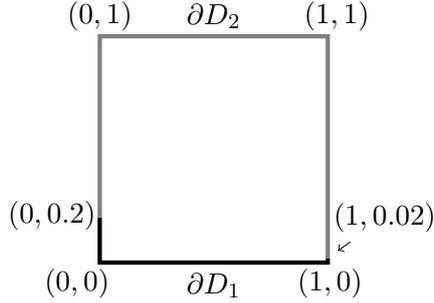
\begin{figure}
	\centering
	\begin{tikzpicture}[scale=3]
\draw[black, ultra thick] (0,.2) -- (0,0) -- (1,0) -- (1, .02);
\draw[gray, ultra thick] (0,.2) -- (0,1) -- (1,1) -- (1, .02);
\node (1) at (-.1,-.09) {$(0,0)$};
\node (2) at (1.04,1.09) {$(1,1)$};
\node (3) at (1.01,-.09) {$(1,0)$};
\node (4) at (0,1.09) {$(0,1)$};
\node (5) at (-0.21, .21) {$(0,0.2)$};
\node (6) at (1.25, .2) {$(1,0.02)$};
\node (7) at (1., 0.02) {};
\draw [->] (6) -- (7) ; 
\node (8) at (.5, -0.1) {$\partial D_1$};
\node (9) at (.5, 1.09) {$\partial D_2$};
\end{tikzpicture}
	\caption{Boundary of the domain, separated in $\partial D_1$ (black) and $\partial D_2$ (grey).}
	\label{fig:domain-boundary}
\end{figure}

\begin{figure}
\centering
\begin{subfigure}{.33\textwidth}
  \centering
  \includegraphics[width=.9\linewidth]{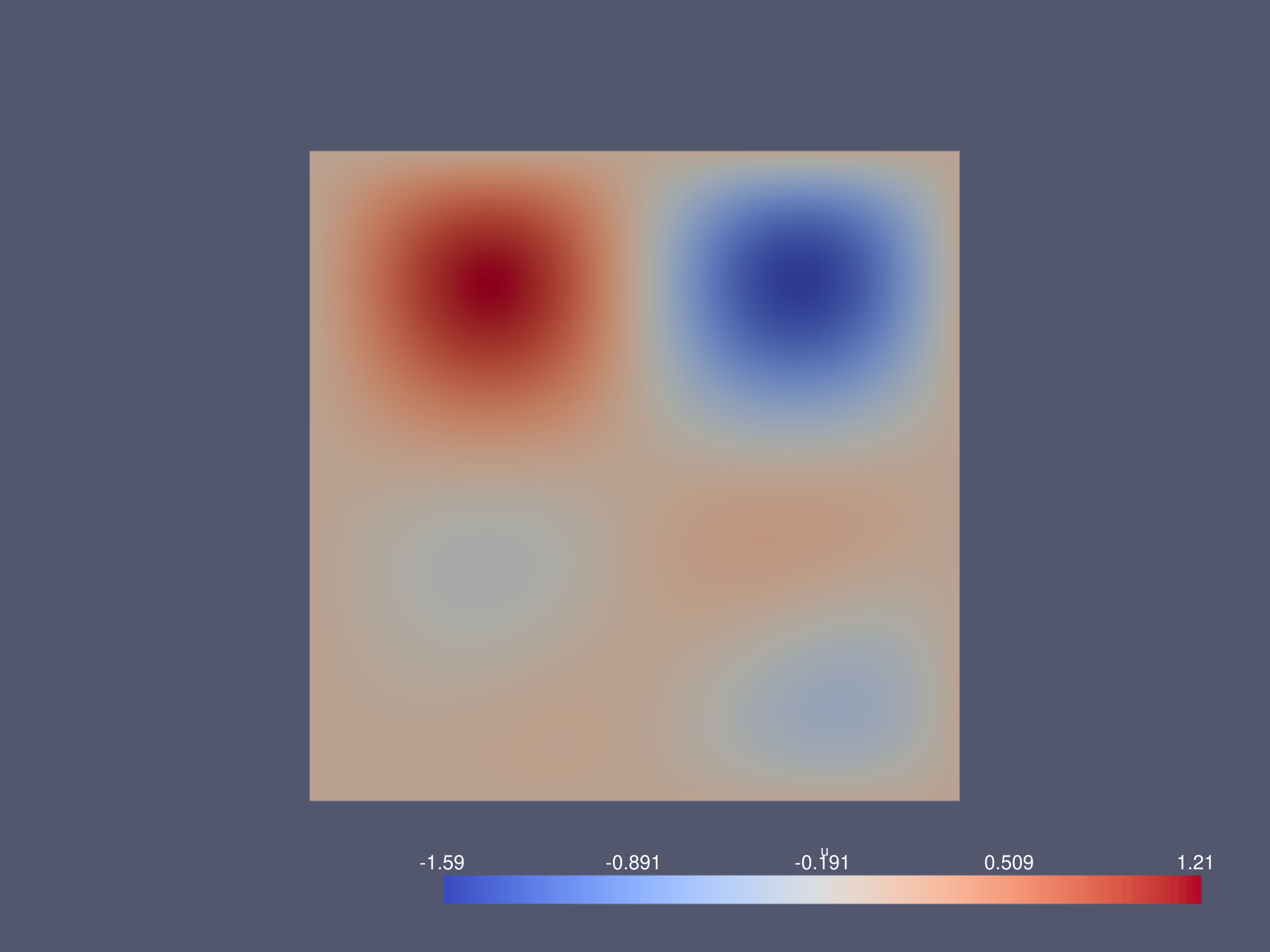}
  \caption{$t=0$}
  \label{fig:real-t1}
\end{subfigure}%
\begin{subfigure}{.33\textwidth}
  \centering
  \includegraphics[width=.9\linewidth]{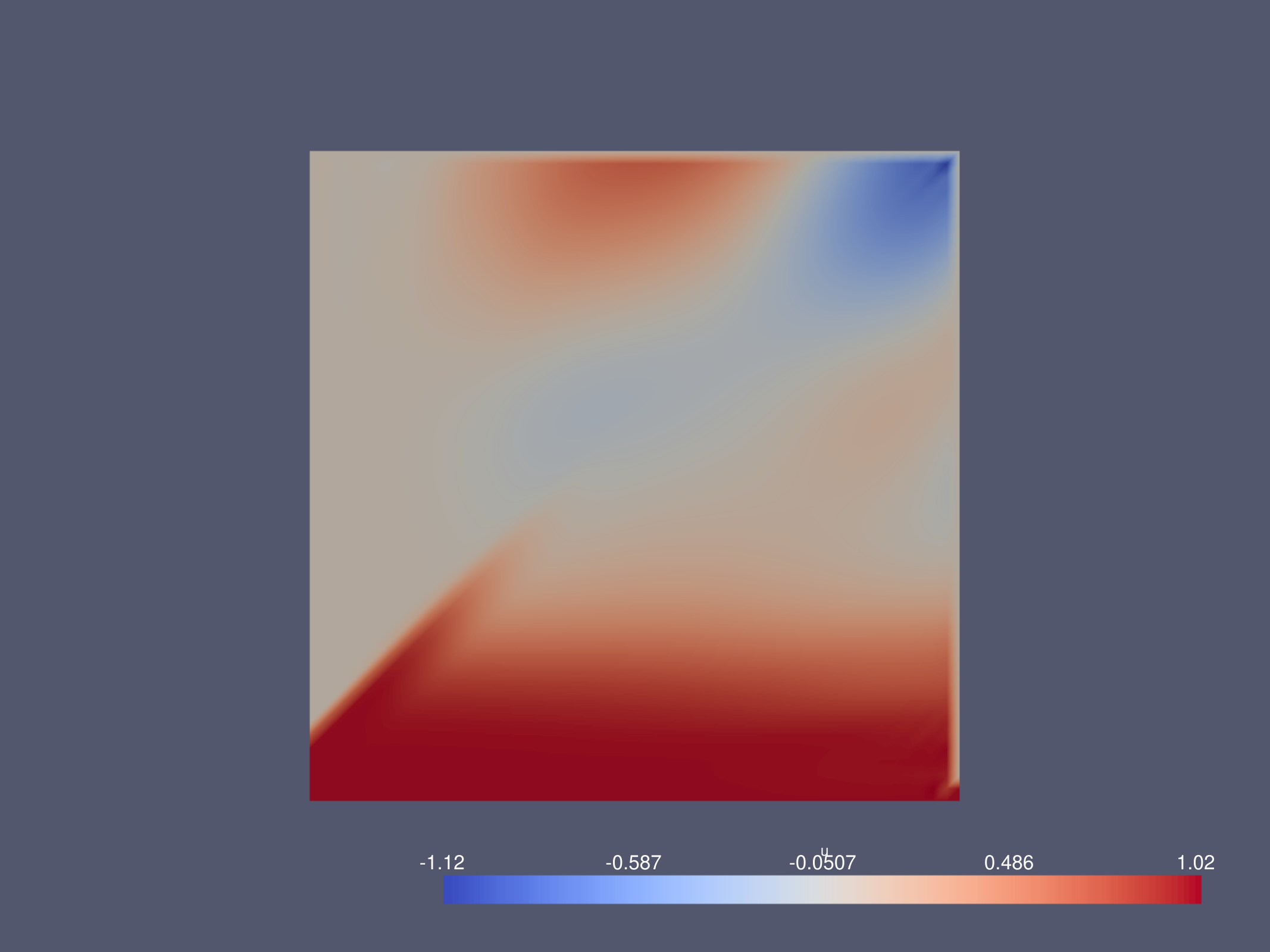}
  \caption{$t = 0.24$}
  \label{fig:real-t2}
\end{subfigure}
\begin{subfigure}{.33\textwidth}
  \centering
  \includegraphics[width=.9\linewidth]{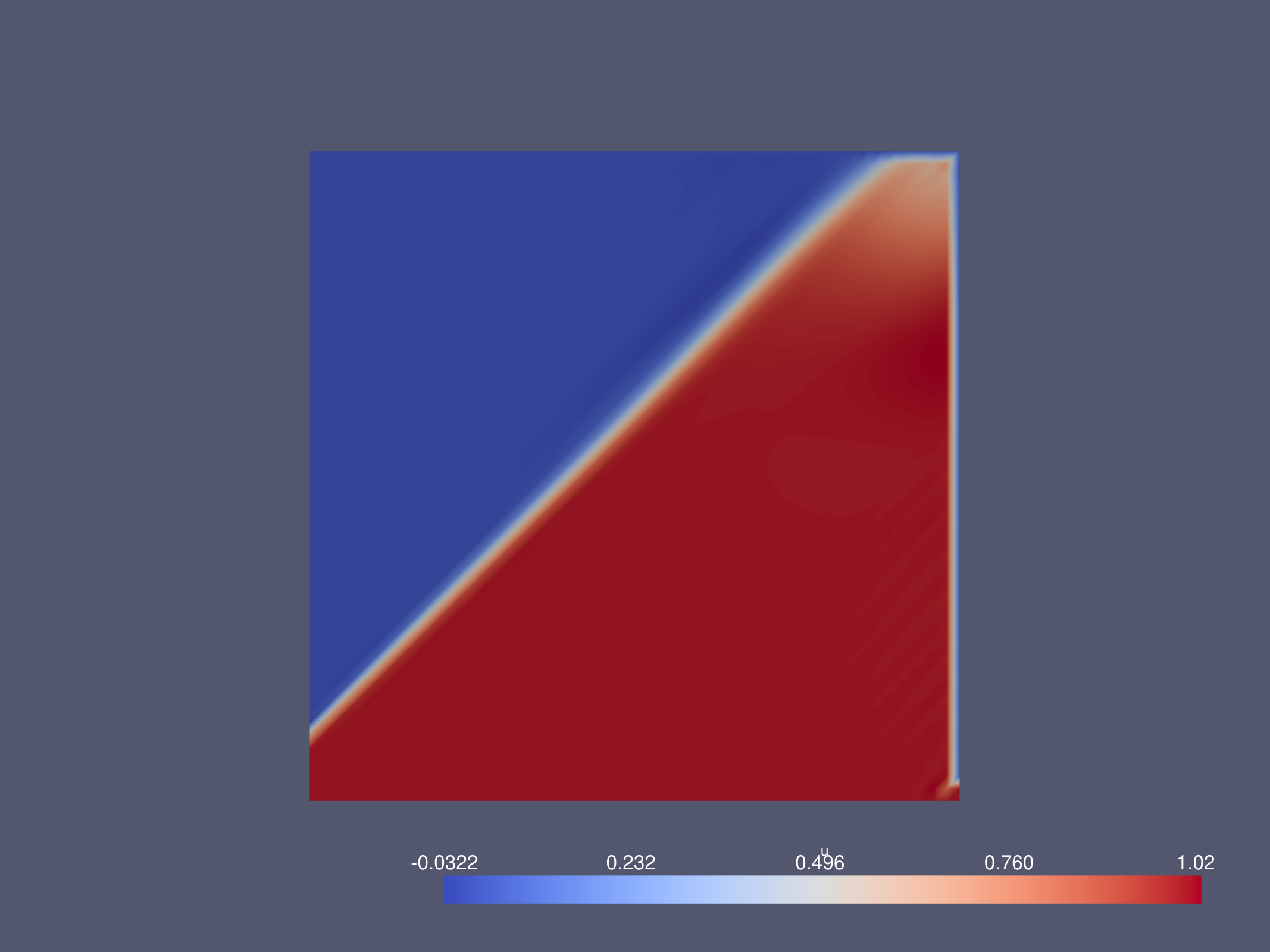}
  \caption{$t = 1.2$}
  \label{fig:real-t3}
\end{subfigure}
\caption{Evolution of parametrised solution for $\mathbf{y} = (1.6 \cdot 10^{-4} , 1, -0.33, -0.77)$.}
\label{fig:reals-time}
\end{figure}



\bibliographystyle{siam} 
\bibliography{sdraft/refs} 

\section{Appendix}

\begin{proof}[Proof of Lemma~\ref{lem:coerasupg}] For notational convenience, denote $u_{\hhmu}$ by $u$. We have
	\begin{multline} \label{eqn:mid-dev1}
	\aSUPG(u,u) = (\varepsilon \nabla u, \nabla u) + (\bu \cdot \nabla u, u) + (cu, u) + \sum_{K \in \Th} \delta_K (- \varepsilon \Delta u + \bu \cdot \nabla u + cu, \bu \cdot \nabla u)_{K,\Lthm} 
	\\
	\geq \hat{\varepsilon} \|\nabla u\|^2 + ((c - \frac{1}{2} \mathrm{div} \, \bu + \nu)u), u) - \nu\|u\|^2 + \sum_{K \in \Th} \delta_K \|\bu \cdot \nabla u\|^2_{K,\Lthm} 
	\\ 
	+  \sum_{K \in \Th} \delta_K (- \varepsilon \Delta u + cu, \bu \cdot \nabla u)_{K,\Lthm}. 
	\end{multline}
	The last term is bounded by
	\begin{multline*}
		|\sum_{K \in \Th} \delta_K (c u, \bu \cdot \nabla u)_{K,\Lthm}| 
		\leq 
		\sum_{K \in \Th} \delta_K \trnorm{c}^{\nicefrac{1}{2}}_K \| |c|^{\nicefrac{1}{2}} u\|_{K,\Lthm}
		\|\bu \cdot \nabla u\|_{K,\Lthm} 
		\\
		\leq \sum_{K \in \Th} \delta_K
		\left(
			\trnorm{c}_{K} \| |c|^{\nicefrac{1}{2}}u\|_{K,\Lthm}^2 + \frac{\|\bu \cdot \nabla u\|^2_{K,\Lthm}}{4}
		\right) 
		\leq  
		\frac{\| |c|^{\nicefrac{1}{2}}u\|^2}{2} + \frac{1}{4} \sum_{K \in \Th} \delta_K \|\bu \cdot \nabla u\|^2_{K,\Lthm}, 
	\end{multline*}
	having used~\eqref{eqn:deltaK-coerc}. Proceeding in the same fashion, 
	\begin{align*}
		|\sum_{K \in \Th} \delta_K (- \varepsilon \Delta u, &\bu \cdot \nabla u)_{K,\Lthm}| 
		\leq  
		\sum_{K \in \Th} \delta_K \|\varepsilon \Delta u\|_{K,\Lthm} \|\bu \cdot \nabla u\|_{K,\Lthm} 
		\\
																												&\leq 
		\sum_{K \in \Th} \delta_K C_E \hat{\varepsilon} \sqrt{d} \frac{C_I}{h_K}\| \nabla u\|_{K,\Lthm} \|\bu \cdot \nabla u\|_{K,\Lthm}
		\\ 
																												&\leq
		\sum_{K \in \Th} \delta_K \left(
			\frac{ d C_E^2 C_I^2}{h^2_K} \hat{\varepsilon}^2 \|\nabla u\|^2_{K,\Lthm} + \frac{\|\bu \cdot \nabla u\|^2_{K,\Lthm}}{4}
		\right)
		\\ 
																												&\leq \frac{\hat{\varepsilon}}{2} \|\nabla u \|^2 
		+ 
		\frac{1}{4} \sum_{K \in \Th} \delta_K \|\bu \cdot \nabla u\|^2_{K,\Lthm}.
	\end{align*}
	Using these bounds in~\eqref{eqn:mid-dev1} and noticing that
	\begin{equation}
		(	(c - \frac{1}{2} \mathrm{div}\,\bu + \nu) u, u) - \frac{1}{2}\| |c|^{\nicefrac{1}{2}} u\|^2 = (	(c - \frac{|c|}{2} - \frac{1}{2} \mathrm{div}\,\bu + \nu) u, u) = \|\mu^{\nicefrac{1}{2}}  u \|^2_{},
	\end{equation}
	yields the result. 
\end{proof}

\begin{proof}[Proof of Theorem~\ref{th:si-stab}]
	Firstly, as the bounds on $\delta_K$ in~\eqref{eqn:si-dK-bound} are smaller than~
	\eqref{eqn:deltaK-coerc}, Lemma~\ref{lem:coerasupg} still holds, and the same steps as the proof in Theorem~\ref{th:im-stab} can be applied. We test~\eqref{eqn:varf-adr-disc-si} against $\uhr[n+1]$, and add and subtract the quantity
	\begin{equation}
		(\varepsilon^{\star} \nabla \uhr[n+1], \uhr[n+1]) + ( c^{\star} \uhr[n+1], \uhr[n+1]) + \sum_{K \in \Th} \delta_K (- \varepsilon^{\star} \Delta \uhr[n+1] + c^{\star} \uhr[n+1], \bu \cdot \nabla \uhr[n+1])_K.
	\end{equation}
	We thus recover the variational formulation of the implicit Euler with some extra terms: 
	\begin{multline} \label{eqn:varfsi}
		\frac{1}{2\dt}(\|\uhr[n+1] \|_{}^2 -  \| \uhr[n] \|_{}^2 +  \| \uhr[n+1]- \uhr[n] \|_{}^2) + \aSUPG(\uhr[n+1], \uhr[n+1])
		\\ 
		= -  \sum_{K \in \Th} \frac{\delta_K}{\dt} (\uhr[n+1] - \uhr[n], \bu \cdot \nabla \uhr[n+1]) +  (f, \uhr[n+1]) + \sum_{K \in \Th} \delta_K (f, \bu \cdot \nabla \uhr[n+1]) 
		\\ + (\varepsilon^{\star} \nabla (\uhr[n+1] - \uhr[n]), \nabla \uhr[n+1]) + (c^{\star} (\uhr[n+1] - \uhr[n]), \uhr[n+1]) 
		\\ 
		+ \sum_{K \in \Th} \delta_K (- \varepsilon^{\star} \Delta \uhr[n+1] + c^{\star} (\uhr[n+1] - \uhr[n]), \bu \cdot \nabla \uhr[n+1])_K.
	\end{multline}
	Upon bounding the extra terms in a suitable fashion, we can apply again the same steps as the proof for theorem~\ref{th:im-stab} (albeit with modified constants). The following terms are bounded as follows:  
	\begin{align}
		|(\varepsilon^{\star} \nabla (\uhr[n+1] - \uhr[n]), \nabla \uhr[n+1])| 
		&\leq 
		\frac{3}{64} \hat{\varepsilon} \|\nabla \uhr[n+1]\|^2_{} + \frac{1}{64} C_1 \hat{\varepsilon} \|\nabla \uhr[n]\|^2_{}, \label{eqn:eps-star-bound}
		\\
		|(c^{\star} (\uhr[n+1] - \uhr[n]), \uhr[n+1])| 
		&\leq 
		\frac{3}{64}  \|\mu^{\nicefrac{1}{2}} \uhr[n+1]\|^2_{} + \frac{1}{64} \| \mu^{\nicefrac{1}{2}} \uhr[n]\|^2_{}, \label{eqn:cstar-bound}
	\end{align}
	and
	\begin{multline}	 \label{eqn:cstar-delta-bound}
		\sum_{K \in \Th} \delta_K |(c^{\star} (\uhr[n+1] - \uhr[n]), \bu \cdot \nabla \uhr[n+1])_K| 
		\\
		\leq \sum_{K \in \Th} \frac{1}{4} \delta_K \trnorm{c^{\star}}_K (\| \mu^{\nicefrac{1}{2}} \uhr[n+1] \|_{K,\Lthm}^{2} +  \| \mu^{\nicefrac{1}{2}} \uhr[n]\|_{K,\Lthm}^2 ) +\frac{\delta_K}{16}\|\bu \cdot \nabla  \uhr[n+1] \|^2_{K, \Lthm}
		\\
		\leq \frac{1}{32} (\| \mu^{\nicefrac{1}{2}} \uhr[n+1] \|_{}^{2} +  \| \mu^{\nicefrac{1}{2}} \uhr[n]\|_{}^2 ) + \sum_{K \in \Th} \frac{\delta_K}{16}\|\bu \cdot \nabla  \uhr[n+1] \|^2_{K, \Lthm},
	\end{multline}
	and similarly 
	\begin{multline} \label{eqn:eps-star-delta-bound}
		\sum_{K \in \Th} \delta_K (- \varepsilon^{\star} \Delta (\uhr[n+1]  - \uhr[n]), \bu \cdot \nabla \uhr[n+1])_{K,\Lthm} 
		\\
		\leq 
		\sum_{K \in \Th} \frac{\delta_K}{4} \frac{\hat{\varepsilon}^2 C_I^2 d}{h^2_K}  (  \|\nabla \uhr[n+1]\|^2 + \|\nabla \uhr[n]\|^2 )
		+ \frac{\delta_K}{16}\|\bu \cdot \nabla  \uhr[n+1] \|^2_{K, \Lthm}
		\\
		\leq \frac{\hat{\varepsilon}}{32} (  \|\nabla \uhr[n+1]\|^2 + \|\nabla \uhr[n]\|^2 ) + \sum_{K\in \Th} \frac{\delta_K}{16} \|\bu \cdot \nabla \uhr[n+1]\|^2_{K,\Lthm}.
	\end{multline}
Summing over $n=0,\ldots,N-1$, we can collect terms of same $n$ to get the upper bound 
\begin{equation*}
	\frac{1}{8} \hat{\varepsilon} \|\nabla \uhr[n+1]\|^2_{} + \frac{1}{8}\|\mu^{\nicefrac{1}{2}} \uhr[n+1]\|^2_{} + \frac{1}{8} \sum_{K \in \Th} \|\bu \cdot \nabla \uhr[n+1]\|^2_{K,\Lthm}
\end{equation*}
for any $n \in 0, \ldots, N-1$. In the proof of Theorem~\ref{th:im-stab}, this term can be absorbed by the remaining coercive term from~\eqref{eqn:coer-bound1} or~\eqref{eqn:coer-bound2}, yielding
\begin{multline*}
	C_1 \|\uhr[n+1]\|^2_{\mathrm{SUPG}} -  
	\frac{1}{8} \hat{\varepsilon} \|\nabla \uhr[n+1]\|^2_{} - \frac{1}{8}\|\mu^{\nicefrac{1}{2}} \uhr[n+1]\|^2_{} - \frac{1}{8} \sum_{K \in \Th} \|\bu \cdot \nabla \uhr[n+1]\|^2_{K,\Lthm} 
	\\
	\geq C_2 \|\uhr[n+1]\|^2_{\mathrm{SUPG}},
\end{multline*}
where $C_1$ and $C_2$ depend on the case. In cases \textit{(i)} and \textit{(iii)}, $C_1 = \frac{1}{4}$ and hence $C_2 = \frac{1}{8}$, and in case \textit{(ii)} $C_1 = \frac{3}{8}$ and $C_2 = \frac{1}{4}$. The only additional terms that must be accounted for are the terms corresponding to $n = 0$, which are then found on the right-hand-side. Continuing the proof exactly as in Theorem~\ref{th:im-stab} yields the result.
\end{proof}



\end{document}